\newcommand{\dotDelta}{{\vphantom{\Delta}\mathpalette\d@tD@lta\relax}}
\newcommand{\d@tD@lta}[2]{%
  \ooalign{\hidewidth$\m@th#1\mkern-1mu\cdot$\hidewidth\cr$\m@th#1\Delta$\cr}%
}
\date{}
\numberwithin{equation}{section}
\DeclareMathOperator{\re}{Re}
\theoremstyle {definition} \newtheorem {defn} {Definition} [section] }
\theoremstyle {plain}  \newtheorem {thm} [defn] {Theorem}}
\theoremstyle {plain}  }
\theoremstyle {plain} \newtheorem {prop} [defn]{Proposition}}
\theoremstyle {plain} \newtheorem {lem}[defn] {Lemma}}
\theoremstyle {definition} \newtheorem {rmk}[defn] {Remark}}
\theoremstyle {plain} }
\def\R{{\mathbb{R}}}
\def\C{{\mathbb{C}}}
\def\Z{{\mathbb{Z}}}
\def\e{{\varepsilon}}
\newcommand{\norm}[1]{\left\|#1\right\|}
\title{An inverse Problem for the cubic $\alpha$-NLS in Sobolev spaces}
\author{Zachary Lee, Nata\v{s}a Pavlovi\'{c}
}
\date{\today}
\begin{document}

\maketitle

\begin{abstract} 
In this work, we address an inverse problem for a defocusing cubic nonlinear Schr\"{o}dinger (NLS) equation in dimensions $d\in\{1, 2,3\}$ in a range of Sobolev spaces $H^s(\R^d)$ by employing the method of approximate solutions. 
We recover a smooth, space-dependent and compactly supported function $\alpha$ that controls the nonlinearity (and thus self-interaction strength) in a multiplicative fashion. 
To the best of our knowledge, this is the first work based on approximate solutions in Sobolev spaces that treats an inverse problem for the NLS and provides explicit recovery of $\alpha$.

\end{abstract}

\setcounter{tocdepth}{1}
\tableofcontents

\parindent = 10pt     
\parskip = 8pt

\section{Introduction}

% {\color{blue}{
% To do list:
% \begin{enumerate}
%     \item Check reference 13
%     \item Check whether papers on NLS using inverse scattering map give recovery
%     \item See if paragraphs after theorem 2.1 talk about the challenge, solution to the challenge and difference from previous works 
%     \item 
% \end{enumerate}

% }}

% {\color{blue}What we do} 

\par 
The forward problem, also called initial-value problem, associated with the nonlinear Schrödinger (NLS) equation for $u:\R^d \times \R \to \C$ 
\begin{align}\label{generic NLS}
        i\partial_t u + \Delta u = \mu \, \alpha(x) \,|u|^{p-1}u
\end{align}
and the initial data  $u(t_0,x)=u_{t_0}(x)$, where $\mu=\pm 1, p>1$, $\alpha\equiv 1$ has been extensively studied, see e.g. \cite{KPV, Bourgain1, GV, KM, I-Team, Bourgain2, Cazenave, tao2006nonlinear}. Such a problem asks the question of constructing a solution to the NLS equation given all the coefficients and initial data. On the other hand, an inverse problem asks the question of recovering an unknown function or parameter in the equation, such as $\alpha(x)$ in \eqref{generic NLS}, given knowledge of the solution. The inverse-scattering problem for the NLS equation, in which knowledge of the scattering map is used to solve for a coefficient or parameter in the equation, is an example of an inverse problem that has received a good deal of attention, see e.g. \cite{Weder01, Weder2001PMAS, WederMMAS, Wave_Paper, Watanabe}. Another type of inverse problem, and the one this work is concerned with, attempts to compute an unknown function $\alpha(x)$ in \eqref{generic NLS} by relying on the existence of the solution to the initial value problem and by constructing an explicit \textit{approximate solution} of the equation valid in a certain regime. The approximate solution should carry, usually hidden in it, information about the desired function. Then, one starts with an initial data in the above mentioned regime, and if the actual solution of the equation can be shown to stay in that regime, one can hope to recover the unknown function from measurement of the true solution (which should be close to the approximate solution). This final step of the process is recovery of the desired function $\alpha(x)$ from the approximate solution. 

% {\color{red} In future, see if we need to make any changes to emphasize that the approximate solution is needed for recovery}

\par 
In this work, we address the inverse problem for $\eqref{generic NLS}$ with $p=3$, $\mu=+1$ in dimensions $d\in\{1, 2,3\}$ in a range of Sobolev spaces $H^s(\R^d)$ by employing the method of approximate solutions. We focus on these dimensions as they are the most physically relevant. Also, going to higher dimensions would lead to a more complicated global theory for the equation, which would distract us from the main purpose of this paper. We also recover a smooth, space-dependent function $\alpha$ that controls the nonlinearity (and thus self-interaction strength) in a multiplicative fashion. Before we present details of our work, let us review previous works that are relevant to the paper at hand. 

\begin{itemize} 

\item The study of inverse problems for the NLS based on the scattering map began in earnest in 1974, when Strauss \cite{Strauss} showed that $\alpha$ and $p$ in \eqref{generic NLS} could be reconstructed from the small-data scattering map if $\alpha$ was real-valued, smooth and bounded, and $p$ satisfied some given conditions. Such a result was later generalized to NLS equations with a potential and a more generic nonlinearity. In particular, Weder \cite{Weder97, Weder01} showed that an additional linear potential term $V_0(x)$ could be recovered via the small-amplitude limit of the scattering map. Subsequently, in \cite{Weder2001PMAS, WederMMAS} Weder also proved that the same approach yields the recovery of a potential $V_0$ and a nonlinearity of the form $\sum_{j\ge 1} V_j(x) |u|^{2j+2j_0}u$, i.e. $V_j$ for $j=0, 1, \ldots$ and $j_0$. In \cite{Watanabe}, Watanabe proved scattering in three-dimension for \eqref{generic NLS} in the range $4/3<p<4$ for certain decaying coefficients $\alpha$ and showed how to recover $\alpha$ from the large-data scattering map.  Subsequently, in \cite{Mur}, Murphy lowered the regularity requirements for the recovery identified in \cite{Strauss, Weder2001PMAS, WederMMAS} and extended the results of \cite{Watanabe} to the mass-critical and mass-subcritical regime in $d\ge 3$. Regarding an NLS with a generalized nonlinearity, we note that in the work \cite{KRMPV}, Kilip, Murphy and Vișan showed that for certain generalized nonlinearities such that the corresponding NLS admits small-data scattering, the scattering map uniquely determines the nonlinearity.

 \item On the other hand, recently the inverse problem for the NLS has been also studied using the method of approximate solutions. In particular, in \cite{HOGAN2023127016}, Hogan, Grow and Murphy studied the inverse problem for the NLS \eqref{generic NLS} with $p=3$ and $d=2,3$ in the Wiener algebra $\mathcal{F}L^1_{x}(\R^d)$. Their approximate solution has a small amplitude $\e^q$ for $q>1$, a frequency that is independent of $\e$, and an amplitude function living on a large ball of radius of the order $1/\e$. In \cite{Lee-Yu}, the first author of this paper and Yu  extended the results of \cite{HOGAN2023127016} to generalized Schrödinger equations with higher order dispersion terms and real-analytic nonlinearities. In order to compare the state of the results for NLS with results for the nonlinear wave equation, we would like to mention that in the context of the nonlinear cubic wave equation in dimensions $d=2,3$, Sá Barreto and Stefanov in \cite{Wave_Paper} considered the inverse problem for $\alpha(x)$ in Sobolev spaces. Thanks to the finite speed of propagation property of the wave equation, the work \cite{Wave_Paper}, considers initial data only lying in $H^{d/2+\delta }_{\mathrm{loc}}(\R^d), \delta>0$ as a result of a lack of decay at infinity.  Furthermore, the authors of \cite{Wave_Paper} explicitly recovered the function $\alpha$. 

\end{itemize}
\par 
To the best of our knowledge, there are no previous works on the inverse problem for the NLS \eqref{generic NLS} in Sobolev spaces based on approximate solutions. In addition, recovery results of the type mentioned above for the nonlinear wave equation \cite{Wave_Paper} are not known in the context of the inverse problem for the NLS via approximate solutions. The paper at hand exactly addresses that gap.
\par
More precisely, we obtain the following results.

\begin{thm}\label{Main Theorem}
Let $d=1,2,3$, $\alpha\in C_c^\infty(\R^d)$, $\alpha \ge 0 ,T_0>0$ such that  $\operatorname{supp}\alpha \subseteq B(0,T_0)$ and $T>\frac{T_0}{2}$. Also let $\xi \in \R^d$ such that $|\xi|=1$ and let $h$
% $h\in \frac{3T{\color{blue}+x_0\cdot \xi}}{2\pi \N}$
be such that $0<h<1$. Suppose $\psi\in C_c^\infty (\R^d)$ and $\operatorname{supp} \psi \subseteq B(0,R)$ and  $\psi(x)\neq 0$ on $B(0,T_0)$. Define
\begin{align} \label{th-st-v}
    v(t,x)= h^{-1/2} e^{i(x\cdot \xi/h - t|\xi|^2/h^2)} \psi(x  -2 \xi t/h -2\xi T) \exp \left(-i |\psi(x - 2\xi t/h - 2\xi T)|^2 \int_{-T}^{t/h} \alpha(x-2\xi t/h +2\xi s)\,ds\right).
\end{align}
If $u\in C_t^0 H^{d/2+\delta}_x([-Th, Th]\times \R^d)$, with $\delta(d)>0$ for $d\in\{1,2\}$ and $\delta(d)\ge 1/2$ for $d=1$, is the unique, strong solution (in the sense of Definition \ref{Strong solution}) to the following initial value problem
\begin{align}
   \begin{cases} \label{alpha NLS}
   i\partial_t u + \Delta u=\alpha(x) u|u|^2 \\
    u(-Th,x) = h^{-1/2} e^{i\frac{x\cdot \xi}{h} + i\frac{T|\xi|^2}{h}} \psi(x)
   \end{cases}
\end{align}
then
\begin{align*}
    \norm{u-v}_{C_{t,x}^0 ([-Th, Th]\times\R^d)} \lesssim_{T,d, \delta} h^{1/2}.
\end{align*}
In particular, for every $x_0\in\R^d$,
\begin{align}\label{u evaluated}
    u(Th, 4\xi T+x_0) = h^{-1/2}  e^{i(3T+x_0\cdot \xi) /h }e^{-i\psi(x_0)^2 X\alpha(x_0, \xi) }\psi(x_0) + O(h^{1/2}),
\end{align}
where
\begin{align}\label{X(alpha) equation}
    X\alpha(x_0, \xi)=  \frac{1}{2}\int_{0}^\infty  \alpha(x_0+s\xi)\,ds.
\end{align}
\end{thm}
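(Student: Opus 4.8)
The plan is to split the proof into two parts: first the uniform estimate $\norm{u-v}_{C^0_{t,x}}\lesssim h^{1/2}$, and then \eqref{u evaluated}, which will follow by substituting $(t,x)=(Th,4\xi T+x_0)$ into \eqref{th-st-v} and invoking that estimate. For the first part the idea is a change of variables that strips off the fast oscillation and rescales everything to unit size. Setting $\Phi(t,x)=x\cdot\xi/h-t\abs{\xi}^2/h^2$ — so that $e^{i\Phi}$ solves the free Schr\"odinger equation, since $i\partial_t e^{i\Phi}+\Delta e^{i\Phi}=(\abs{\xi}^2/h^2-\abs{\xi}^2/h^2)e^{i\Phi}=0$ — I would write $u(t,x)=h^{-1/2}e^{i\Phi(t,x)}\,w\big(t/h,\,x-2\xi t/h-2\xi T\big)$, combining the Galilean boost $e^{i\Phi}$ (which intertwines with the free flow), the amplitude factor $h^{-1/2}$, the time rescaling $\tau=t/h$ taking $[-Th,Th]$ to $[-T,T]$, and the translation $y=x-2\xi t/h-2\xi T$ into the frame moving with group velocity $2\xi/h$. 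A direct computation then shows that $w=w(\tau,y)$ solves
\begin{equation*}
    i\partial_\tau w+h\Delta_y w=\alpha\big(y+2\xi(\tau+T)\big)\,w\abs{w}^2,\qquad w(-T,y)=\psi(y),\qquad \tau\in[-T,T],
\end{equation*}
a semiclassical NLS with $O(1)$-size data, and that $v$ corresponds under the same change of variables to the explicit profile
\begin{equation*}
    w_0(\tau,y)=\psi(y)\exp\!\Big(-i\abs{\psi(y)}^2\!\int_{-T}^{\tau}\alpha\big(y+2\xi(\sigma+T)\big)\,d\sigma\Big).
\end{equation*}
The key observation is that $w_0$ is precisely the solution of the formal $h=0$ limit: for each fixed $y$ that limit is the ODE $i\partial_\tau w=\alpha(\cdot)w\abs{w}^2$, and since the imaginary-coefficient nonlinearity conserves $\abs{w}^2=\abs{\psi(y)}^2$ pointwise, the ODE is linear and solved by $w_0$.

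Next I would verify that $w_0$ is a good approximate solution. Since $\abs{w_0}=\abs{\psi}$ pointwise, $w_0$ solves $i\partial_\tau w_0=\alpha\big(y+2\xi(\tau+T)\big)w_0\abs{w_0}^2$ exactly, so its residual in the semiclassical equation is exactly $h\Delta_y w_0$. As $\psi,\alpha\in C_c^\infty$, the profile $w_0$ is smooth and compactly supported in $y$ with all norms bounded in terms of $\psi,\alpha,T,\delta$ uniformly in $h\in(0,1)$, so this residual is $O_{T,d,\delta}(h)$ both in $C^0_{\tau,y}$ and in $H^s$ with $s:=d/2+\delta$; undoing the change of variables, this is the statement that $i\partial_t v+\Delta v-\alpha v\abs{v}^2=h^{-1/2}e^{i\Phi}\Delta_y(\text{amplitude of }v)$, lower order by a factor $h$ relative to the nonlinearity $\sim h^{-3/2}$. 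Also, at $\tau=-T$ the integral defining $w_0$ is empty and $y=x$, so $w_0(-T,\cdot)=\psi$; equivalently $v$ matches the initial data in \eqref{alpha NLS} exactly at $t=-Th$.

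Then I would run an $H^s$ energy estimate for $r:=w-w_0$, where $w$ is the transform of the assumed strong solution $u\in C_t^0H^s$ (in the sense of Definition \ref{Strong solution}); thus $r\in C^0_\tau H^s([-T,T]\times\R^d)$, $r(-T,\cdot)=0$, and $i\partial_\tau r+h\Delta_y r=\alpha\big(y+2\xi(\tau+T)\big)\big(w\abs{w}^2-w_0\abs{w_0}^2\big)-h\Delta_y w_0$. Differentiating $\norm{r}_{H^s}^2$, the dispersive term drops out because $\inner{ih\Delta_y r,\,r}_{H^s}$ is purely imaginary; since $s=d/2+\delta>d/2$ makes $H^s(\R^d)$ an algebra and $\norm{w\abs{w}^2-w_0\abs{w_0}^2}_{H^s}\lesssim\big(\norm{w}_{H^s}+\norm{w_0}_{H^s}\big)^2\norm{r}_{H^s}$, I would obtain
\begin{equation*}
    \frac{d}{d\tau}\norm{r}_{H^s}^2\lesssim\big(\norm{w}_{H^s}^2+\norm{w_0}_{H^s}^2\big)\norm{r}_{H^s}^2+h\,\norm{w_0}_{H^{s+2}}\norm{r}_{H^s}.
\end{equation*}
Using $\norm{w_0}_{C^0_\tau H^{s+2}}\lesssim_{T,d,\delta}1$ uniformly in $h$, a continuity/bootstrap argument — on the sub-interval of $[-T,T]$ where $\norm{r}_{H^s}\le1$ one has $\norm{w}_{H^s}\le\norm{w_0}_{H^s}+1\lesssim_{T,d,\delta}1$ — together with Gr\"onwall's inequality gives $\norm{r}_{C^0_\tau H^s([-T,T])}\lesssim_{T,d,\delta}h$ once $h$ is small; Sobolev embedding $H^s\hookrightarrow C^0$ and undoing the change of variables (which restores the $h^{-1/2}$ amplitude) then yield $\norm{u-v}_{C^0_{t,x}}\lesssim_{T,d,\delta}h^{1/2}$. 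I expect this stability step to be the main obstacle. In the original variables $u$ and $v$ carry $H^s$ norms of size $\sim h^{-1/2-s}$, so the energy estimate closes only after the change of variables has brought every quantity to $O(1)$: both the $h^{1/2}$ amplitude rescaling and the boost lowering the frequency support to $O(1)$ are essential, as is the fact that the rescaled equation retains an $O(1)$ (not blowing up) coefficient in its semiclassical form. One also has to be careful that the assumed membership $u\in C_t^0H^s$ only provides an a priori finite, possibly $h$-dependent bound, which the bootstrap upgrades to the quantitative $O(h)$ closeness; the conditions on $(d,\delta)$ are exactly what is needed for $H^s$ to be an algebra here.

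Finally, for \eqref{u evaluated} I would substitute $(t,x)=(Th,\,4\xi T+x_0)$ into \eqref{th-st-v}. The frame translate becomes $x-2\xi t/h-2\xi T=x_0$; the linear phase becomes $x\cdot\xi/h-t\abs{\xi}^2/h^2=(3T+x_0\cdot\xi)/h$ using $\abs{\xi}=1$; and, writing $x-2\xi t/h=2\xi T+x_0$ and substituting $\sigma=2(s+T)$, the nonlinear phase becomes $-i\abs{\psi(x_0)}^2\cdot\tfrac12\int_0^{4T}\alpha(x_0+\sigma\xi)\,d\sigma$. By $\supp\alpha\subseteq B(0,T_0)$ and $T>T_0/2$ one has $\abs{x_0+\sigma\xi}\ge\sigma-\abs{x_0}\ge T_0$ for $\sigma\ge4T$ whenever $\abs{x_0}\le4T-T_0$ — in particular on $\supp\alpha$; and for $\abs{x_0}\ge R$ both sides of \eqref{u evaluated} are $O(h^{1/2})$ since $\psi(x_0)=0$ — so the upper limit $4T$ may be replaced by $\infty$, giving the nonlinear phase $-i\psi(x_0)^2X\alpha(x_0,\xi)$ with $X\alpha$ as in \eqref{X(alpha) equation} (reading $\psi(x_0)^2$ for $\abs{\psi(x_0)}^2$, $\psi$ being real-valued). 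Thus $v(Th,4\xi T+x_0)=h^{-1/2}e^{i(3T+x_0\cdot\xi)/h}e^{-i\psi(x_0)^2X\alpha(x_0,\xi)}\psi(x_0)$, and combining with $\norm{u-v}_{C^0_{t,x}}\lesssim h^{1/2}$ yields \eqref{u evaluated}.
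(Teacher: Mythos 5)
Your argument is correct, but it takes a genuinely different route from the paper. The paper builds a high-order WKB approximation $u_N=h^{-1/2}e^{i\phi_h}(a_0+ha_1+\cdots+h^Na_N)$, solves a hierarchy of linear transport equations for the correctors $a_1,\dots,a_N$, and then applies its stability estimate (Proposition \ref{Stability}) in the original variables; because the residual $R_N$ is measured in $H^s_x$ of a rapidly oscillating function it costs a factor $h^{-s}$, which is why the paper must take $N=3$ before the error beats the $O(h^{1/2})$ gap between $u_N$ and $v$. You instead pass to the Galilean/semiclassical frame $u=h^{-1/2}e^{i\Phi}w(t/h,\,x-2\xi t/h-2\xi T)$, identify $v$ with the exact solution $w_0$ of the $h=0$ ODE limit (the same conservation-of-$|w|^2$/characteristics computation as the paper's treatment of $a_0$), and close a single $H^s$ Gr\"onwall estimate for $w-w_0$, using that the residual is exactly $h\Delta_y w_0=O(h)$ at $O(1)$ frequencies and that $ih\Delta$ contributes nothing to the $H^s$ energy. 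This eliminates the correctors $a_1,a_2,a_3$ entirely and still yields the $h^{1/2}$ bound; what the paper's hierarchy buys in exchange is a reusable expansion framework (parallel to \cite{Wave_Paper}) from which higher-order accuracy could in principle be extracted.

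Two points to tighten. First, differentiating $\norm{r}_{H^s}^2$ in $\tau$ is not literally justified when $r$ is only in $C^0_\tau H^s$; run the same estimate on the Duhamel representation $r(\tau)=\int_{-T}^{\tau}e^{ih(\tau-\sigma)\Delta}(\cdots)\,d\sigma$ using that $e^{ih\tau\Delta}$ is an isometry of $H^s$ --- this is also the form in which Definition \ref{Strong solution} transfers to $w$ via the Galilean covariance of the free propagator, so it is the natural formulation here. Second, your case analysis for replacing $\int_0^{4T}$ by $\int_0^{\infty}$ (either $|x_0|\le 4T-T_0$ or $\psi(x_0)=0$) is in fact more careful than the paper's, which only verifies the replacement for $x_0\in B(0,T_0)$; neither argument literally covers every $x_0\in\R^d$ unless $R\le 4T-T_0$, but this is an issue with the statement rather than with your proof.
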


\begin{rmk}
$X\alpha(x_0,\xi)$, defined in \eqref{X(alpha) equation}, is closely related to the classical X-ray transform $\mathbf{P}\alpha(x,\xi)$  as defined in \cite{Natterer}. They are related by
\begin{align}
    \mathbf{P}\alpha(x_0,\xi):= \int_{-\infty}^\infty \alpha(x_0 + s\xi)\,ds = 2\left(X\alpha(x_0, \xi) + X\alpha(x_0, -\xi)\right).
\end{align}
\end{rmk}
% \begin{rmk}
%     The requirement that $h\in \frac{3T}{2\pi \N}$ (that is $h$ is \textit{quantized}) is novel to our work and is a vital ingredient for our recovery process.
% \end{rmk}
In this paper we also prove the following result that recovers $X\alpha$. 

\begin{thm}\label{Recovery Theorem}
    Assume the same hypotheses as in Theorem \ref{Main Theorem}. Also, assume that $\psi(x)$ is constant with non-zero value $K$ in the non-empty, bounded region
    \begin{align}
            C_{\xi, T_0} = \left(\{x_0\in \R^d: X\alpha(x_0,\xi)=0\} \cap B(0,2T_0)\right) \cup \operatorname{supp}\alpha
    \end{align}
    Then, there exists an 
    integer-valued $g$ such that $X(\alpha)$, defined in equation \eqref{X(alpha) equation}, may be recovered from 
    \begin{align*}
    K^2 X\alpha(x_0, \xi) = -\operatorname{Im} \log \left\{K^{-1}e^{-i(3T+x_0\cdot \xi) /h }h^{1/2} u(Th, 4\xi T+x_0)\right\} + 2\pi g(x_0, \xi) + O(h).
    \end{align*}
\end{thm}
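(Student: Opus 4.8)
The plan is to feed the asymptotic formula \eqref{u evaluated} of Theorem \ref{Main Theorem} into the region where $\psi$ is prescribed to be constant, and then isolate the phase by taking a complex logarithm. I would first restrict attention to $x_0 \in C_{\xi,T_0}$, where by hypothesis $\psi(x_0)=K\neq 0$. Note $C_{\xi,T_0}\subseteq B(0,2T_0)$ is bounded, and since $\operatorname{supp}\alpha\subseteq B(0,T_0)$ is contained in it, the conclusion \eqref{u evaluated} of Theorem \ref{Main Theorem} applies on $C_{\xi,T_0}$ with an error that is $O(h^{1/2})$ \emph{uniformly} in $x_0$ (the bound in Theorem \ref{Main Theorem} is measured in $C^0_{t,x}$). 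Substituting $\psi(x_0)=K$ into \eqref{u evaluated} and multiplying through by $K^{-1} e^{-i(3T+x_0\cdot\xi)/h} h^{1/2}$ yields
\begin{align*}
    w_h(x_0) \;:=\; K^{-1} e^{-i(3T+x_0\cdot\xi)/h} h^{1/2}\, u(Th,\, 4\xi T + x_0) \;=\; e^{-iK^2 X\alpha(x_0,\xi)} + O(h),
\end{align*}
uniformly for $x_0\in C_{\xi,T_0}$, where the cancellation of the oscillatory prefactor $h^{-1/2}e^{i(3T+x_0\cdot\xi)/h}$ is exact and only the $O(h^{1/2})$ remainder, now weighted by $h^{1/2}$, survives.

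Next I would take the imaginary part of a complex logarithm of $w_h(x_0)$. Because the leading term $e^{-iK^2 X\alpha(x_0,\xi)}$ lies on the unit circle, for $h$ small (depending only on $T,d,\delta,K$) one has $\tfrac12\le|w_h(x_0)|\le 2$ for every $x_0\in C_{\xi,T_0}$, so $\log w_h(x_0)$ is defined modulo $2\pi i\,\Z$ and $w\mapsto\operatorname{Im}\log w$ is Lipschitz on the annulus $\{\tfrac12\le|w|\le 2\}$ (as a map into $\R/2\pi\Z$). Hence
\begin{align*}
    \operatorname{Im}\log w_h(x_0) \;=\; \operatorname{Im}\log\!\bigl(e^{-iK^2 X\alpha(x_0,\xi)}\bigr) + O(h) \;=\; -K^2 X\alpha(x_0,\xi) + 2\pi\, g(x_0,\xi) + O(h),
\end{align*}
where $g(x_0,\xi)\in\Z$ accounts for the branch of the logarithm; rearranging gives precisely the displayed identity. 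This is the analytic core of the statement; the only delicate point is that the multiplicative $O(h)$ error in $w_h$ must not be amplified upon taking $\log$, which is exactly why one needs the uniform lower bound $|w_h|\ge\tfrac12$ — i.e. why $K\neq 0$ and why $\psi$ is required to be constant, not merely nonzero, on $C_{\xi,T_0}$ (otherwise the unknown factor $K^{-1}\psi(x_0)$ would perturb the modulus).

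Finally, to see that $X\alpha$ is genuinely \emph{recovered}, I would resolve the integer ambiguity $g$. On the reference set $\{x_0: X\alpha(x_0,\xi)=0\}\cap B(0,2T_0)\subseteq C_{\xi,T_0}$, the right-hand side above is $O(h)$, forcing $g\equiv 0$ there for $h$ small. Since $\alpha\in C_c^\infty$, the function $x_0\mapsto X\alpha(x_0,\xi)$ is continuous, and along each line in direction $\xi$ it is monotone because $\alpha\ge 0$; moreover $X\alpha(x_0,\xi)=X\alpha(x_1,\xi)$ whenever $x_1$ is the first point of $\operatorname{supp}\alpha$ met by the forward ray $x_0+s\xi$, $s\ge 0$, so that $X\alpha(\cdot,\xi)$ on $C_{\xi,T_0}=(\{X\alpha=0\}\cap B(0,2T_0))\cup\operatorname{supp}\alpha$ together with its vanishing elsewhere determines $X\alpha(\cdot,\xi)$ on all of $\R^d$. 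Starting from $g=0$ on the reference set and moving along each $\xi$-line, $g(x_0,\xi)$ increments by one each time the observed phase $-\operatorname{Im}\log w_h$ winds past $\pm\pi$ (a jump visible in the measured data), which pins it down unambiguously. I expect the main obstacle to be packaging this last continuity-and-winding argument cleanly, ensuring the monotone structure of $X\alpha(\cdot,\xi)$ and the geometry of $C_{\xi,T_0}$ are used so that the branch is tracked without ambiguity; the preceding steps are a direct manipulation of \eqref{u evaluated}.
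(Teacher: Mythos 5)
Your proposal is correct and follows essentially the same route as the paper: substitute the asymptotic formula \eqref{u evaluated} with $\psi(x_0)=K$ on $C_{\xi,T_0}$, divide out the known oscillatory prefactor so the error becomes $O(h)$, take $\operatorname{Im}\log$ with a $2\pi\Z$-valued correction $g$, and pin down $g$ by starting from the reference set $\{X\alpha(\cdot,\xi)=0\}\cap B(0,2T_0)$ where the left-hand side vanishes. The only difference is that you sketch the phase-unwrapping/branch-tracking argument explicitly, whereas the paper delegates it to the cited work of S\'a Barreto--Stefanov.
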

\begin{rmk}
% We note that the above theorem provides explicit recovery of $X(\alpha)$ by taking $K_{x_0}$ even smaller if necessary such that $O(K_{x_0}) \ll X(\alpha)$ (for example, this is guaranteed if $\norm{\psi}_{L^\infty_x(\R^d)}$ is small enough). Then, we pick $h$ small enough such that $O(K_{x_0}^{-3}h) \ll X(\alpha)$. 
This in turn recovers $\alpha$ itself for $d\in\{2,3\}$ via the X-ray inverse transform by varying $\xi\in S^{d-1}(\R^d)$ via the inversion formula given in \cite{Natterer}.

% since the $O(\cdot)$ error terms are much smaller than the first term on the right-hand side, which is $O(1)$. 

\end{rmk}
\begin{rmk}
    For $d=1$, we may take  $\xi=1$ in \eqref{X(alpha) equation} and note that 
    \begin{align}
        X\alpha(x_0, 1) &= \frac{1}{2}\int_0^\infty \alpha(x_0+ s)\,ds.
    \end{align}
    Then, 
    \begin{align}
        \frac{d}{dx_0} X\alpha(x_0, 1) &= \frac{1}{2} \int_0^\infty \alpha'(x_0+s)\,ds \\
        &= -\frac{1}{2}\alpha(x_0).
    \end{align}
Hence, we recover $\alpha$ in $d=1$ given knowledge of the tangent of $X\alpha(x_0, 1)$ in $x_0$, which we may compute given knowledge of $y\mapsto X\alpha(y, 1)$ in a neighborhood of $x_0$.
\end{rmk}
% \begin{rmk}
%     Note that we do not consider the case $d=1$ since in this case $\mathbf{P}\alpha(x_0,\xi)$ reduces to 
%     \begin{align}
%         \int_{-\infty}^\infty \alpha(x_0+ \xi s)\,ds = \frac{1}{|\xi|}\int_{\R} \alpha(s)\,ds,
%     \end{align}
%     which is not enough information to recover $\alpha$. Indeed, any two functions whose integrals are equal will have the same X-ray transform.
% \end{rmk}
\par

\par 

Our approach to proving the above result is inspired by the work on the nonlinear wave equation \cite{Wave_Paper} and is based on constructing an approximate solution for a large-amplitude and large-frequency initial wave packet. Both \cite{Wave_Paper} and our work use an ansatz in the form of a truncated asymptotic expansion in powers of $h$, an approach that also appears in \cite{rauch2015hyperbolic}. One important advantage of working with large data is that the signal is more likely to rise above any background noise than if using small amplitude data. While doing so, we face two challenges that we describe below.

\begin{itemize}

\item The first challenge is that the $\alpha$-NLS \eqref{alpha NLS} has an infinite speed of propagation, unlike the nonlinear wave equation, and so we must develop our well-posedness theory on the entire space $\R^d$. This is in contrast to how the work \cite{Wave_Paper} treats the problem; the authors in fact use the finite speed of propagation of the wave equation and work in a large ball of finite radius $R_1>0$. As a result, they can consider initial data of the form $\psi(x\cdot \xi)$ for $\psi\in C_c^\infty(\R^d)$, which is only locally integrable because of a lack of decay at infinity in dimension $d\ge 2$. On the other hand, we consider initial data of the form $\psi(x) \in C_c^\infty(\R^d)$ (note the lack of inner product) which lies in the full Sobolev space $H^{d/2+\delta}_x(\R^d)$.

% {\color{red}but we show that it may still hold if we restrict the values of $h$ to cancel out a rapidly varying phase in our approximate solution, which otherwise would make it impossible to recover the comparably small signal from $\alpha$. This has the implication that the amplitude $h^{-1/2}$ and frequency $h^{-1}$ of our initial data are quantized.}

\item The second challenge is that the NLS is only first order in the time variable. Consequently, when analyzing the asymptotic expansion of our ansatz in the weakly-nonlinear regime (inspired by the approach of \cite{Wave_Paper}), the resulting Cauchy transport problem is degenerate as it lacks a time derivative term. This is unlike the case of the wave equation \cite{Wave_Paper}, as there the equation is second order in time and thus their transport equation is non-degenerate as an initial-value problem. To surmount this difficulty, we \textit{borrow} a time derivative with a different power of $h$, resulting in an asymmetry in the transport equation. To resolve this problem, we modify our ansatz so that the space and time-dependent coefficients in our expansion are rapidly varying in time, that is they are functions of $t/h$, with first derivatives in time formally on the order of $1/h$ (for details, see \eqref{3-aA}). More precisely, we assume that our solution of the equation \eqref{generic NLS} takes the form of a truncated asymptotic expansion
\begin{align}\label{approximate solution introduction}
    u(t,x)=h^{-1/2} e^{i\frac{x\cdot \xi}{h} - i\frac{t|\xi|^2}{h^2}} \left(A_0(t/h, x) + h \, A_1(t/h, x) +\ldots + h^N A_N(t/h, x)\right).
\end{align}

% Note that after rescaling $t\mapsto th$, the above relation is completely equivalent to
% \begin{align}
%     u(th, x)= h^{-1/2} e^{i\frac{x\cdot \xi}{h} - i\frac{t|\xi|^2}{h}} \left(A_0(t, x) + h \, A_1(t, x) +\ldots\right),
% \end{align}
% where now the coefficients in the asymptotic expansion of $u$ are independent of $h$.

To adapt to our rapidly varying in time solution, we only evolve our solution over a small time scale on the order of $h$. From the ansatz for $u(t,x)$, we also generate a hierarchy of equations for each $A_j(t,x), j=0, 1,  \ldots, N$ that is different than the one present in \cite{Wave_Paper}. In particular, we treat the leading order equation as a nonlinear transport equation and the $\Delta$ term as a forcing term in the next highest order transport equation, and we repeat this process. These adaptations let us control the higher order terms, and therefore show that the approximate solution closely matches the true solution to the point where the recovery of $\alpha$ is possible.
% Furthermore, the short-time nature of our analysis is advantageous for a numerical verification of our result as the solution must only be
\par 
\end{itemize}

\subsection{Organization of the paper}
We divide the paper in several parts.  In Section \ref{Preliminary Results}, we present some  preliminary results on the global well-posedness and stability of our $\alpha$-cubic NLS in the Sobolev spaces $H^{d/2+\delta}(\R^d)$ for $\delta>0$. In Section \ref{Construction of Approximate Solution}, we solve first the equations for $A_k(t,x)$ in \eqref{approximate solution introduction} for arbitrary $k\ge 0$ in the relevant Sobolev spaces, and derive a stability estimate showing our approximate solution is close to the exact one. In Section \ref{Recovery}, we use the stability results to recover $\alpha$. In Appendix \ref{Appendix}, we prove Proposition 2.3, 2.4 and 2.6.

\subsection{Notation}
We will sometimes write that $A\lesssim B$ to mean that there exist $C>0$ such that $A\le CB$. If $C$ depends on a parameter $\alpha$, we write $A\lesssim_\alpha B$. .

\subsection*{Acknowledgments}
Z.L. gratefully acknowledges support from the NSF under grants No.
DMS-1840314 and DMS-2052789 as well as The University of Texas at Austin for the  Provost's Graduate Excellence Fellowship.
N.P. gratefully acknowledges support from the NSF under grants No.
DMS-1840314 and DMS-2052789.

\section{Preliminary Results}

\label{Preliminary Results}
While existence, uniqueness and stability of solutions to cubic NLS in $\R^d$ for $d\in\{1,2,3\}$ is a well-studied problem, since our initial value problem \eqref{alpha NLS} has a function $\alpha$ included in front of the nonlinearity, in order to make the paper self-contained, in this section we state results pertaining to existence, uniqueness and stability of solutions to \eqref{alpha NLS}.

We start by stating the definition of a strong solution to the initial-value problem \eqref{alpha NLS}.

\begin{defn}\label{Strong solution}
    Let $h$ be as in the statement of Theorem \ref{Main Theorem} let $I\subset \R$. We say that $u(t,x)$ is a \textit{strong solution} in $C^0 H^{d/2+\delta}(I\times \R)$ of 
    \begin{align}
   \begin{cases} \label{alpha NLS 2}
   i\partial_t u + \Delta u=\alpha(x) u|u|^2 \\
    u(-Th,x) = \phi(x)
   \end{cases}
\end{align}
 on an interval $I$ containing $-Th$, if  $u \in  C_t^0 H^{d/2+\delta}(I\times \R)$ and it satisfies
      \begin{align*}
        u(t)= e^{i(t+Th)\Delta} \phi(x) -i\int_{-Th}^t e^{i(t-s)\Delta} \alpha(x) u|u|^2(s)\,ds
    \end{align*}
    for all $t\in I$. If $I$ can be taken to be $[-Th, \infty)$, then we say the strong solution is \textit{global}.
\end{defn}

In order to obtain the results stated in the introduction, we need to work with a solution $u$ to the defocusing cubic $\alpha$-NLS 
equation \eqref{alpha NLS} which exists for sufficiently long time.
% to exist for a sufficiently long time (longer than that is guaranteed by the local theory), we are going to 
Our approach is to first establish a unique global solution to \eqref{alpha NLS}  (in the sense of Definition \ref{Strong solution}) 
in the energy space $C_t^0 H^1_x([-Th, \infty)\times \R^d)$. Then 
by using a persistence of regularity argument, we obtain a solution in $C^0_t H^{d/2+\delta}_x([-Th,\infty)\times \R^d)$.

We first state a result guaranteeing existence of a unique global strong solution in $C^0_t H^1_x$ to \eqref{alpha NLS}.
\begin{prop}\label{GWP H1}
    Let $\phi\in H^1(\R^d), d\in\{1,2,3\}, \alpha\in C_c^\infty(\R^d)$ with $\alpha\ge 0$. Then, there is a unique global strong solution $u$ in $C_t^0 H^1_x([-Th, \infty)\times \R^d)$
    of \eqref{alpha NLS 2}. Furthermore, 
$$\norm{u}_{C^0_t H^1_x([-Th, \infty) \times \R^d)} \lesssim \norm{\phi}_{H^1_x(\R^d)}.$$
\end{prop}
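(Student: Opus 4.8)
The plan is to run the standard three-step program for defocusing, $H^1$-subcritical (and mass-critical) cubic NLS, adapted to the smooth, bounded coefficient $\alpha$. First I would establish local well-posedness in $H^1_x$ on an interval whose length depends only on $\|\phi\|_{H^1_x}$, by a contraction-mapping argument for the Duhamel map $\Phi(u)(t)=e^{i(t+Th)\Delta}\phi-i\int_{-Th}^{t}e^{i(t-s)\Delta}\,\alpha\,u|u|^2(s)\,ds$ in a Strichartz space $C^0_tH^1_x\cap L^q_tW^{1,r}_x$ on a short interval around $-Th$. The cubic nonlinearity is mass-subcritical for $d=1$, $L^2$-critical for $d=2$ ($s_c=0$), and $\dot H^{1/2}$-critical for $d=3$ ($s_c=1/2$), so in every case $H^1$ lies strictly above the scaling-critical regularity; Strichartz estimates together with H\"older, the Leibniz rule and the Sobolev embedding $H^1\hookrightarrow L^6$ then give a contraction on a ball with a gain of a positive power of the time length, hence $T_{\mathrm{loc}}=T_{\mathrm{loc}}(\|\phi\|_{H^1})>0$. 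The factor $\alpha\in C^\infty_c\subset W^{1,\infty}$ only costs constants: a derivative landing on $\alpha$ is absorbed by $\|\nabla\alpha\|_{L^\infty}$, otherwise $\|\alpha\|_{L^\infty}$ suffices, so every nonlinear estimate reduces to the $\alpha\equiv1$ case up to a constant depending on $\alpha$.

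Next I would record that the $C^0_tH^1_x$ solution conserves the mass $M(u)=\|u(t)\|_{L^2_x}^2$ and the energy $E(u)=\tfrac12\|\nabla u(t)\|_{L^2_x}^2+\tfrac14\int_{\R^d}\alpha(x)\,|u(t,x)|^4\,dx$; these identities are verified first for smooth solutions and then transferred to the $H^1$ flow by approximating $\phi$ in $H^1$ by Schwartz data and passing to the limit using the Strichartz stability from the first step. Since $\alpha\ge0$, the quartic term is nonnegative, so $\tfrac12\|\nabla u(t)\|_{L^2}^2\le E(u(t))=E(\phi)$; combining this with $\|u(t)\|_{L^2}=\|\phi\|_{L^2}$ and the bound $\int_{\R^d}\alpha|\phi|^4\le\|\alpha\|_{L^\infty}\|\phi\|_{L^4}^4\lesssim_\alpha\|\phi\|_{H^1}^4$ (Sobolev embedding $H^1\hookrightarrow L^4$, valid for $d\le4$) yields $\sup_{t\ge-Th}\|u(t)\|_{H^1_x}\lesssim_\alpha(1+\|\phi\|_{H^1})\|\phi\|_{H^1}$, in particular the stated estimate.

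Finally, since the local lifespan depends only on the current $H^1$ norm and that norm never exceeds the a priori bound just obtained, I would iterate the local construction on consecutive intervals of a fixed length to extend the solution to all of $[-Th,\infty)$, with the asserted bound. Uniqueness in $C^0_tH^1_x$ follows from the contraction estimate together with a short unconditional-uniqueness argument: any $C^0_tH^1_x$ solution lies, by Strichartz, in the iteration space on small subintervals, where the difference of two solutions with the same data is estimated away, and one then continues by continuity. The only point that requires genuine care is the rigorous justification of the conservation laws for merely $H^1$ solutions in the critical cases $d=2,3$ — i.e. the approximation argument above — but this is classical; and it is precisely the hypothesis $\alpha\ge0$ that drives the a priori bound, since without it the energy would not control $\|\nabla u\|_{L^2}$ and $H^1$ global existence could genuinely fail.
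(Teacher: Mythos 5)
Your proposal follows essentially the same route as the paper's (sketched) proof: local well-posedness by a contraction for the Duhamel map in $C^0_tH^1_x\cap S^1_{t,x}$, with $\alpha,\nabla\alpha\in L^\infty$ absorbed into constants, followed by conservation of mass and of the energy $E(u)=\tfrac12\|\nabla u\|_{L^2}^2+\tfrac14\int\alpha|u|^4$, whose quartic term is nonnegative since $\alpha\ge0$, giving an a priori $H^1$ bound that lets the local theory iterate globally. Your observation that the resulting bound is really $\lesssim_\alpha(1+\|\phi\|_{H^1})\|\phi\|_{H^1}$ rather than purely linear is a fair (and slightly more careful) reading of what energy conservation actually yields, and does not affect how the proposition is used later.
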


The next result follows by a persistence of regularity argument. 
\begin{prop}\label{GWP H^{d/2+}}
    Let $\phi\in H^{d/2+\delta}_x(\R^d)$, $\alpha\in C_c^\infty(\R^d)$ with $\alpha \ge 0$ and $\delta(d)$ as in the statement of Theorem \ref{Main Theorem}. Then, there is a unique global strong solution $u$ in $C_t^0 H^{d/2+\delta}_x([-Th, \infty)\times \R^d)$
    of \eqref{alpha NLS 2}.
\end{prop}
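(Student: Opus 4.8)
The plan is to combine the energy-space global theory of Proposition \ref{GWP H1} with a local-in-time contraction argument at the higher regularity $s:=d/2+\delta$, and then to \emph{persist} that regularity up to any finite time using the global a priori bound on $\norm{u}_{H^1_x}$. First note that the hypothesis on $\delta(d)$ in Theorem \ref{Main Theorem} (the condition $\delta\ge 1/2$ is imposed when $d=1$ precisely so that $1/2+\delta\ge 1$, while $d/2+\delta\ge 1$ holds automatically when $d\in\{2,3\}$) guarantees the embedding $H^s(\R^d)\hookrightarrow H^1(\R^d)$. Hence $\phi\in H^1(\R^d)$, and Proposition \ref{GWP H1} produces a \emph{unique} global strong solution $u\in C^0_tH^1_x([-Th,\infty)\times\R^d)$ with $\norm{u}_{C^0_tH^1_x}\lesssim\norm{\phi}_{H^1_x}$. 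Uniqueness of a strong solution in $C^0_tH^s_x$ is then automatic: such a solution satisfies the same Duhamel identity (Definition \ref{Strong solution}) and lies in $C^0_tH^1_x$, where it must coincide with $u$.

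For local well-posedness at regularity $s$, I would run the Banach fixed-point argument for the Duhamel map $\Phi(w)(t)=e^{i(t+Th)\Delta}\phi-i\int_{-Th}^te^{i(t-\sigma)\Delta}\alpha\,w\abs{w}^2(\sigma)\,d\sigma$ on a ball in $C^0_tH^s_x([-Th,-Th+\tau]\times\R^d)$. Since $s>d/2$, $H^s(\R^d)$ is a Banach algebra, and $\alpha\in C^\infty_c(\R^d)\subset H^s(\R^d)$ so multiplication by $\alpha$ is bounded on $H^s(\R^d)$; thus $\norm{\alpha\,w\abs{w}^2}_{H^s_x}\lesssim_\alpha\norm{w}_{H^s_x}^3$, and together with the unitarity of $e^{it\Delta}$ on $H^s_x$ this makes $\Phi$ a contraction once $\tau=\tau(\norm{\phi}_{H^s_x})>0$ is small enough. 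The same estimate yields the blow-up alternative: the $H^s$-solution extends as long as $\norm{u(t)}_{H^s_x}$ stays bounded, and since the local existence time depends only on the current value of $\norm{u(\cdot)}_{H^s_x}$, it suffices to bound $\norm{u(t)}_{H^s_x}$ on every interval $[-Th,T']$.

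To obtain that a priori bound I would use the Duhamel formula, the fractional Leibniz (Kato--Ponce / Moser) inequality $\norm{fg}_{H^s_x}\lesssim\norm{f}_{L^\infty_x}\norm{g}_{H^s_x}+\norm{g}_{L^\infty_x}\norm{f}_{H^s_x}$, and the boundedness of multiplication by $\alpha$. When $d=1$ this closes immediately: $H^1(\R)\hookrightarrow L^\infty(\R)$, so $M:=\sup_{t\ge -Th}\norm{u(t)}_{L^\infty_x}\lesssim\sup_t\norm{u(t)}_{H^1_x}<\infty$, and
\begin{align*}
\norm{u(t)}_{H^s_x}\le\norm{\phi}_{H^s_x}+C\int_{-Th}^t\norm{u(\sigma)}_{L^\infty_x}^2\,\norm{u(\sigma)}_{H^s_x}\,d\sigma,
\end{align*}
so $\norm{u(t)}_{H^s_x}\le\norm{\phi}_{H^s_x}\,e^{CM^2(t+Th)}$ by Gr\"onwall, finite on every bounded interval, hence global. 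When $d\in\{2,3\}$, $H^1$ no longer embeds in $L^\infty$, so I would instead carry out the persistence estimate in Strichartz spaces: the cubic nonlinearity is $H^1$-subcritical in $d\le 3$, so on any bounded interval $I$ the energy-space solution lies in $L^q_tW^{1,r}_x(I)$ for admissible pairs $(q,r)$ with norms controlled by $\norm{u}_{C^0_tH^1_x}$ and $\abs{I}$; partitioning $I$ into finitely many subintervals $I'$ on which the energy-level Strichartz norms of $u$ are small, a Strichartz estimate at the $H^s$ level combined with the fractional Leibniz rule closes $\norm{u}_{S^s(I')}\lesssim\norm{u(t_0)}_{H^s_x}$ on each piece, and summing over the pieces bounds $\norm{u}_{C^0_tH^s_x([-Th,T'])}$ for every $T'$.

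The main obstacle is exactly this $d\in\{2,3\}$ case: because the energy-space solution need not be bounded in $L^\infty_x$, the one-line Gr\"onwall argument is unavailable and one must run a genuine Strichartz-based persistence-of-regularity scheme, keeping careful track of admissible exponents so that the fractional-Leibniz contributions of the low-regularity factors are absorbed by the (finite) $H^1$-level Strichartz norms on short subintervals. The remaining ingredients --- the algebra property of $H^s$ for $s>d/2$, boundedness of multiplication by $\alpha\in C^\infty_c$, unitarity of the Schr\"odinger propagator, and the passage from local to global existence via the blow-up alternative --- are routine.
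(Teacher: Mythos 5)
Your proposal is correct and follows the same overall architecture as the paper: reduce to the global $C^0_tH^1_x$ theory of Proposition \ref{GWP H1} via the embedding $H^{d/2+\delta}\hookrightarrow H^1$ guaranteed by the choice of $\delta(d)$, then persist the higher regularity using Duhamel, the product estimate $\norm{fg}_{H^{s}}\lesssim\norm{f}_{L^\infty}\norm{g}_{H^{s}}+\norm{g}_{L^\infty}\norm{f}_{H^{s}}$, and Gr\"onwall. The one place you diverge is the case $d\in\{2,3\}$: you assert that the one-line Gr\"onwall argument is unavailable because $H^1\not\hookrightarrow L^\infty$, and you substitute the standard subinterval-partition bootstrap in Strichartz spaces. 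The paper instead observes that the Gr\"onwall exponent only requires $\norm{u}_{L^2_tL^\infty_x(I\times\R^d)}^2$ to be finite, not $\norm{u}_{L^\infty_tL^\infty_x}$, and this quantity is bounded directly by $|I|^{\theta}\norm{\phi}_{H^1}^2$ using H\"older in time to reach a near-endpoint admissible pair together with the Sobolev embedding $W^{1,d+\e}(\R^d)\hookrightarrow L^\infty(\R^d)$ applied to the global $S^1$ bound from the energy theory; the same Gr\"onwall inequality then closes in all three dimensions at once, with the dimension only affecting the power of $|I|$. Both routes are valid and rest on the same ingredient (Strichartz control of the energy-level solution); the paper's version avoids the partition-and-sum step and yields an explicit exponential bound $\norm{\phi}_{H^{d/2+\delta}}\exp(C\norm{\alpha}_{H^{d/2+\delta}}|I|^{\theta}\norm{\phi}_{H^1}^2)$ on every bounded interval, while yours is the more familiar textbook persistence scheme and requires more careful bookkeeping of admissible exponents on each subinterval.
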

We also have a stability result, of the type that can be found in \cite{Cazenave}. 
\begin{prop}[Stability]\label{Stability}
    If $u$ is a strong $C_t^0 H^{d/2+\delta}_x([-Th,Th]\times \R^d) $ solution to \eqref{alpha NLS} and $u_N$ is a strong $C_t^0 H^{d/2+\delta}_x([-Th,Th]\times \R^d) $ solution to 
    \begin{align*}
    \begin{cases}
        i\partial_t u_N + \Delta u_N - \alpha u_N|u_N|^2 = R_N & \\
        u_N(-Th,x) = h^{-1/2} e^{ix\cdot \xi/h +iT|\xi|^2/h} \psi(x)
        \end{cases}
    \end{align*}
then we have that
\begin{align*}
    \norm{u-u_N}_{C_t^0 H^{d/2+\delta}_x([-Th,Th]\times \R^d)} \lesssim_{d, \delta, T,N} h\norm{R_N}_{C_t^0 H^{d/2+\delta}_x([-Th,Th]\times \R^d)}.
\end{align*}
\end{prop}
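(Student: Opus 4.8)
The plan is to control the difference $w:=u-u_N$, which solves $i\partial_t w+\Delta w=\alpha\,(u|u|^2-u_N|u_N|^2)-R_N$ with vanishing data $w(-Th,\cdot)=0$, through a Gr\"onwall argument applied to its Duhamel representation $w(t)=-i\int_{-Th}^t e^{i(t-\sigma)\Delta}\big[\alpha(u|u|^2-u_N|u_N|^2)-R_N\big](\sigma)\,d\sigma$. Writing $s=d/2+\delta$ and using the unitarity of $e^{it\Delta}$ on $H^s$, the forcing term contributes $\int_{-Th}^t\|R_N(\sigma)\|_{H^s}\,d\sigma\le 2Th\,\|R_N\|_{C^0_tH^s}$; since the time interval $[-Th,Th]$ has length $2Th$, this is precisely where the advertised factor $h$ is produced. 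It then remains to absorb the cubic difference back into $\|w\|_{H^s}$ with a constant that, crucially, is independent of $h$.

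\textbf{The main obstacle} is that this last step cannot be done by the naive route. Since $H^s$ is an algebra for $s>d/2$ and multiplication by $\alpha\in C_c^\infty$ is bounded on $H^s$, one has $\|\alpha(u|u|^2-u_N|u_N|^2)\|_{H^s}\lesssim(\|u\|_{H^s}^2+\|u_N\|_{H^s}^2)\,\|w\|_{H^s}$. But the data $h^{-1/2}e^{ix\cdot\xi/h+\dots}\psi$ is a wave packet with carrier frequency $\xi/h$, so by the a priori bounds of Propositions \ref{GWP H1} and \ref{GWP H^{d/2+}} both $\|u\|_{C^0_tH^s}$ and $\|u_N\|_{C^0_tH^s}$ are of size $\sim h^{-1/2-s}$. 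The resulting coefficient $\sim h^{-1-2s}$, integrated over the interval of length $\sim h$, yields a Gr\"onwall exponent of size $\sim h^{-2s}$, which blows up as $h\to0$. These large Sobolev norms are, however, an artifact of the fast oscillation rather than of the genuine profile, and the whole point is to organize the estimate so that this artifact cancels.

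To expose the cancellation I would pass to the gauge-transformed, moving, time-rescaled frame $u(t,x)=h^{-1/2}e^{i(x\cdot\xi/h-t|\xi|^2/h^2)}V(t/h,\,x-2\xi t/h)$, and define $V_N$ and $\mathcal R_N$ from $u_N$ and $R_N$ in the same way. A direct computation, in which the fast-phase and transport contributions cancel (using $|\xi|=1$ together with the group velocity $2\xi/h$), turns \eqref{alpha NLS} into the semiclassical NLS $i\partial_\tau V+h\Delta_y V=\alpha(y+2\xi\tau)\,|V|^2V$ on the \emph{$h$-independent} interval $\tau\in[-T,T]$, with $O(1)$ Cauchy data $\psi(\cdot-2\xi T)$ and with forcing $h\,\mathcal R_N$ in the equation for $V_N$; this is exactly the profile structure underlying the ansatz \eqref{approximate solution introduction}. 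In these variables the profiles carry $h$-uniform norms (the mass $\|V\|_{L^2}$ is conserved, and the remaining bounds follow from well-posedness of this equation), so applying the Duhamel formula for $\Omega:=V-V_N$, the unitarity of $e^{i\tau h\Delta}$, the product/algebra estimate with $O(1)$ profile norms, and Gr\"onwall on the fixed interval $[-T,T]$ yields $\|\Omega\|_{C^0_\tau H^s_y}\lesssim_{T,d,\delta}h\,\|\mathcal R_N\|_{C^0_\tau H^s_y}$ with a constant that no longer depends on $h$.

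The final step transfers this back to physical variables. The map sending a profile $g$ to the wave packet $h^{-1/2}e^{ix\cdot\xi/h}g(\cdot-2\xi t/h)$ scales the $H^s$ norm by a common factor $\sim h^{-1/2-s}$, since all $s$ derivatives see the carrier frequency $1/h$; because this factor is shared by $u-u_N$ and by $R_N$, the powers of $h$ cancel and one recovers $\|u-u_N\|_{C^0_tH^s_x}\lesssim_{T,d,\delta,N}h\,\|R_N\|_{C^0_tH^s_x}$, the dependence on $N$ entering only through the fixed profile $\mathcal R_N$. I expect the genuinely delicate points to be (i) the $h$-uniform a priori control of the \emph{true}-solution profile $V$ in the moving frame, and (ii) the bookkeeping in the norm transfer, which requires knowing that $u$ and $u_N$ stay frequency-localized near $\xi/h$ so that the wave-packet scaling is two-sided and the cancellation is exact.
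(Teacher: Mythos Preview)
Your diagnosis of the naive route is correct, and it is worth knowing that the paper's own proof \emph{is} precisely that naive route: Duhamel for $w=u-u_N$, the identity $u|u|^2-u_N|u_N|^2=(u-u_N)(|u_N|^2+\bar u\,u_N)+\overline{(u-u_N)}\,u^2$, the $H^{d/2+\delta}$ algebra property, and Gr\"onwall on $[-Th,Th]$. The paper arrives at
\[
\|w(t)\|_{H^s}\;\le\;2hT\,C(d,\delta)\,\|\alpha\|_{H^s}\,\exp\!\Big(2hT\,C(d,\delta)^3\,\|\alpha\|_{H^s}\,M^2\Big)\,\|R_N\|_{C^0_tH^s},
\]
with $M:=\max\big(\|u\|_{C^0_tH^s},\|u_N\|_{C^0_tH^s}\big)$, and then asserts that the right-hand side is $\lesssim_{d,\delta,T,N}h\,\|R_N\|$. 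There is no moving-frame step, and the paper does not address the point you raise, namely that $M\gtrsim\|h^{-1/2}e^{ix\cdot\xi/h}\psi\|_{H^s}\sim h^{-1/2-s}$, so that $hM^2\sim h^{-2s}$ and the exponential is not uniformly bounded as $h\to0$. You have therefore not so much deviated from the paper's argument as identified an unexplained step in it and proposed a repair.

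On your repair: the reduction to the semiclassical profile equation $i\partial_\tau V+h\Delta_y V=\alpha(y+2\xi\tau)\,|V|^2V$ on the fixed interval $[-T,T]$ is the right move, and your point (i) can be closed by a bootstrap off the approximate profile $V_N=A_0+hA_1+\cdots+h^NA_N$, whose $H^s$ norms are $h$-uniform by construction. The step that does \emph{not} go through as you describe it is (ii). Modulation by $e^{ix\cdot\xi/h}$ is not a two-sided isomorphism on $H^s$ with $h$-uniform constants: one has $\|e^{ix\cdot\xi/h}g\|_{H^s}\lesssim h^{-s}\|g\|_{H^s}$, but the matching lower bound fails whenever $\hat g$ carries mass near $-\xi/h$, so the ``common factor cancels'' claim presupposes a frequency localization of $u-u_N$ that you have not yet established (and which is essentially what you are trying to prove). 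The clean way out is not to transfer the $H^s$ estimate back at all: prove the profile bound $\|V-V_N\|_{C^0_\tau H^s_y}\lesssim_{T,d,\delta}h\,\|\mathcal R_N\|_{C^0_\tau H^s_y}$, control $\|\mathcal R_N\|_{C^0_\tau H^s_y}$ directly from the $a_k$ bounds (this is $O(h^{N-1/2})$ with no oscillatory loss), and then pass to $C^0_{t,x}$ via the modulation-invariant identity $\|u-u_N\|_{L^\infty_{t,x}}=h^{-1/2}\|V-V_N\|_{L^\infty_{\tau,y}}$ together with Sobolev embedding on the profile side. This delivers exactly the $C^0_{t,x}$ estimate that the paper actually uses downstream, without ever asserting the $H^s$ statement with an $h$-independent constant.
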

A sketch of the proofs of Propositions \ref{GWP H1} and \ref{GWP H^{d/2+}} as well as a detailed proof of Proposition \ref{Stability} can be found in Appendix \ref{Appendix}.

\section{Construction of Approximate Solution}\label{Construction of Approximate Solution}
% \subsection{Asymptotic solution}

\par 
In this section we prove Theorem \ref{Main Theorem}. In order to describe our strategy, which is inspired by \cite{S_Barreto_2022}, let 
us assume that the hypotheses of Theorem \ref{Main Theorem} are satisfied. We shall construct an approximate solution of  \eqref{alpha NLS} with an error no larger than a constant multiple of $h$ measured in the $C^0_{t} L^\infty_x ([-hT, hT]\times \R^d)$  norm. To do so, we consider an ansatz in the form of a truncated asymptotic series in powers of $h$, with a leading order amplitude of order $h^{p}$ for $p=-1/2$ to place the solution in a regime where recovery of $\alpha$ is possible. We solve the leading order nonlinear transport equation, followed by the higher order linear equations. 

% We consider the defocusing cubic $\alpha$-NLS in $\R_x^d\times \R_t$.
% \begin{align}\label{NLW}
%    P(u):= i\partial_t u + \Delta u-\alpha(x) u|u|^2=0.
% \end{align}
% with $\alpha\ge 0, \alpha\in C_c^\infty(\R^d)$, $\operatorname{supp}(\alpha)\subseteq B(0,T)$. and an incoming wave 
% \begin{align}
%     u(-Th,x) = h^{-1/2} e^{i\frac{x\cdot \xi}{h} - i\frac{t|\xi|^2}{h^2}} \psi(x)\bigg|_{t=-Th}
% \end{align}
% with $\psi\in H^{d/2+\delta+2}_x(\R^d)$ and $\xi\in \R^d$ and
% \begin{align}
%     0<h\ll 1, h\in \frac{3T|\xi|^2}{ 2\pi \N}.
% \end{align}
We first consider an ansatz of the form
\begin{align*}
    u(t,x)= e^{i\phi_h(t,x)} h^p a(t,x, \xi, h)
\end{align*}
with $\phi_h(t,x) = \phi(h^{-2}t, h^{-1} x)$ and some to be determined function $a$.  Substituting this ansatz in \eqref{alpha NLS} and multiplying the equation by $e^{-i\phi_h }$, we find that $a$ must satisfy
\begin{align}\label{3-a}
 e^{-i\phi_h }h^pa(i\partial_t + \Delta) e^{i\phi_h} + h^p(i\partial_t + \Delta) a + 2 ih^{p-1} (\nabla \phi)_h \cdot \nabla a - h^{3p} \alpha(x) a |a|^2=0 
    % &=i h^{p-2} a\left(i(\partial_t \phi)_h + (\Delta \phi)_h + i|(\nabla \phi)_h|^2 \right) +h^p (i\partial_t + \Delta)a + 2 i h^{p-1} (\nabla \phi)_h \cdot \nabla a - h^{3p} \alpha(x) a|a|^2.
\end{align}
In this paper, we are interested only in the weakly nonlinear case, that is when the transport term $\nabla \phi\cdot \nabla a$ is of the same order as the nonlinearity, that is $3p=p-1 \implies p=-1/2$. This will be the regime where we will recover $\alpha(x)$ (in Section \ref{Recovery}).  \par 
From now on, let $p=-1/2$ and 
\begin{align*}
    \phi_h(t,x) = \frac{x\cdot \xi}{h} - \frac{t|\xi|^2}{h^2}.
\end{align*}
Then, we notice that
\begin{align*}
    (i\partial_t + \Delta) e^{i\phi_h}=0.
\end{align*}
Hence, our equation \eqref{3-a} for the amplitude $a$ becomes 
\begin{align}\label{equation for a}
    h^{-1/2} (i\partial_t + \Delta)a + 2 i h^{-3/2} \xi \cdot \nabla a - h^{-3/2} \alpha(x) a|a|^2=0.
\end{align}
We now make the ansatz that $a$ obeys the following truncated asymptotic expansion with respect to $h$,
\begin{align} \label{a_j equation}
    a(h, t,x)&=  a_0(t,x, h) + h\,a_1(t,x, h) + h^2 a_2(t,x, h) + \ldots + h^N a_N(t,x,h)
\end{align}
for some $N\in \mathbb{N}$. To be consistent with the initial data given in Theorem \ref{Main Theorem}, we should impose the following constraints for $a_k$ at time $t=-Th$:
\begin{align}
    a_0( x, -hT, h) &= \psi(x)\\
    a_k( x, -hT, h) &\equiv 0, \mbox{ for } k\ge 1.
\end{align}

% A_0(t,x/h) + h\,A_1(t,x/h) + h^2 A_2(t,x/h) + \ldots + h^N A_N(t,x/h)
% where  Note that the coefficients $A_j(x, t/h)$ are \textit{rapidly varying} in time, with first derivative formally on the order of $h^{-1}$, but independent of $h$ in the spatial variables. 

We substitute this form into \eqref{equation for a}, and record it in the following way, which will help us decompose the nonlinear equation \eqref{equation for a} into a system consisting of 
a simpler nonlinear equation (for $a_0$) and a family of linear equations:

% \begin{align}
% \begin{split}\label{Family}
%     &h^{-3/2} \left(ih\partial_t a_0 + 2i\xi \cdot \nabla a_0 -\alpha(x) a_0 |a_0|^2 \right) +  \\
%     & h^{-1/2}\left(ih\partial_t a_1 + i 2\xi\cdot \nabla a_1 + \Delta a_0 - \alpha(x) (2a_0 a_0 \overline{a_1} + 2a_0\overline{a_0}a_1) \right) +\\
%     & h^{1/2} \left( ih\partial_t a_2 + i2\xi\cdot \nabla a_2  +\Delta a_1 - \alpha(x) O(a_0 a_2 a_2 + a_1 a_1  a_0) \right) + \\
%     & h^{3/2}\left(ih\partial_t a_3  + i 2\xi\cdot \nabla a_3 + \Delta a_2- \alpha(x) O(a_3 a_0 a_0 + a_2 a_1 a_0 + a_1 a_1 a_1)\right) + \ldots+  \\
%     % \left(f_1(a_0, \overline{a_0}, a_1, \overline{a_1}, a_2, \overline{a_2})a_3  +f_2(a_0, \overline{a_0}, a_1, \overline{a_1}, a_2, \overline{a_2})\overline{a_3} \right) 
%     % &h^{5/2} \left( ih\partial_t a_4 + i2\xi\cdot \nabla a_4 - \alpha(x) \left(v_1(a_0, \overline{a_0}, a_1, \overline{a_1}, a_2, \overline{a_2}, a_3, \overline{a_3})a_4  +v_2(a_0, \overline{a_0}, a_1, \overline{a_1}, a_2, \overline{a_2}, a_3, \overline{a_3})\overline{a_4}\right)\right) +\\
%     &h^{N-3/2}\left(ih\partial_t a_N  + i 2\xi\cdot \nabla a_N + \Delta a_{N-1}-\alpha(x) g(a_N, \overline{a_N}, \ldots, a_0, \overline{a_0}) \right) +\\
%     &h^{N-1/2}\Delta a_N =0.
% \end{split}
% \end{align}
\begin{align}
\begin{split}\label{Family}
    &h^{-3/2} \left(ih\partial_t a_0 + 2i\xi \cdot \nabla a_0 -\alpha(x) a_0 |a_0|^2 \right) +  \\
    & h^{-1/2}\left(ih\partial_t a_1 + i 2\xi\cdot \nabla a_1 + \Delta a_0 - \alpha(x) (2a_0 a_0 \overline{a_1} + 2a_0\overline{a_0}a_1) \right) +\\
    & h^{1/2} \left( ih\partial_t a_2 + i2\xi\cdot \nabla a_2  +\Delta a_1 - \alpha(x) g_2(a_0, a_1, a_2, \overline{a_0}, \overline{a_1}, \overline{a_2})  \right) + \\
    & h^{3/2}\left(ih\partial_t a_3  + i 2\xi\cdot \nabla a_3 + \Delta a_2- \alpha(x) g_3(a_0, a_1, a_2,a_3, \overline{a_0}, \overline{a_1}, \overline{a_2}, \overline{a_3})\right) + \ldots+  \\
    % \left(f_1(a_0, \overline{a_0}, a_1, \overline{a_1}, a_2, \overline{a_2})a_3  +f_2(a_0, \overline{a_0}, a_1, \overline{a_1}, a_2, \overline{a_2})\overline{a_3} \right) 
    % &h^{5/2} \left( ih\partial_t a_4 + i2\xi\cdot \nabla a_4 - \alpha(x) \left(v_1(a_0, \overline{a_0}, a_1, \overline{a_1}, a_2, \overline{a_2}, a_3, \overline{a_3})a_4  +v_2(a_0, \overline{a_0}, a_1, \overline{a_1}, a_2, \overline{a_2}, a_3, \overline{a_3})\overline{a_4}\right)\right) +\\
    &h^{N-3/2}\left(ih\partial_t a_N  + i 2\xi\cdot \nabla a_N + \Delta a_{N-1}-\alpha(x) g_N(a_0, a_1, a_2, \dots, a_N, \overline{a_0}, \overline{a_1}, \overline{a_2}, \dots, \overline{a_N}) \right) +\\
    &h^{N-1/2}\Delta a_N + \sum_{N+1\le k\le 3N} h^{k-3/2} g_k(a_0, a_1, a_2, \dots, a_k, \overline{a_0}, \overline{a_1}, \overline{a_2}, \dots, \overline{a_k})   =0 
\end{split}
\end{align}
% where $g_j, f_j, v_j, j=1,2$ are polynomials.
where 
\begin{align}
    g_k(a_0, a_1, a_2, \dots, a_k, \overline{a_0}, \overline{a_1}, \overline{a_2}, \dots, \overline{a_k}) = \sum_{\substack{j+m+l=k \\ 0\le j,m,l}}  a_j a_m \overline{a_l}
\end{align}
is the coefficient of $h^k$ in the expansion of the cubic nonlinear term
\begin{align}
    (a_0+ ha_1 + \cdots + h^N a_N) \left|a_0+ ha_1 + \cdots + h^N a_N\right|^2=a_0|a_0|^2 + g_1h + \cdots + g_{3N}h^{3N}.
\end{align}
Note that for each $k$, $g_k$ has in every of its term at most one copy of $a_k$ or $\overline{a_k}$.
% where $g_k$ is a cubic polynomial in $a_j, \overline{a_j}$ for $0\le j\le k$ with at most one copy of $a_k$ or $\overline{a_k}$ appearing in each term. 

In Section \ref{a0}, we solve the equation for $a_0$ (which corresponds to setting the first line of \eqref{Family} to be zero), while in Section \ref{higher order terms Section}, we solve the higher order equations for $a_k, k\ge 1$ (the equation for $a_k$ corresponds to the $k+1$-th line of \eqref{Family} being zero). Then in Section \ref{Recovery}, we perform the recovery of $\alpha(x)$. 
However before we do that we further adjust our ansatz for $a_j$, $j \in \{0,1,\dots,N\}$ as follows: 
\begin{equation}\label{3-aA}
a_j(t,x,h)= A_j(x, t/h)
\end{equation}
Note that the coefficients $A_j(x, t/h)$ are \textit{rapidly varying} in time, with first derivative in time formally of the order of $h^{-1}$, but independent of $h$ in the spatial variables.

\subsection{The leading order equation for  $a_0$ }\label{a0}

% and
% \begin{align}
%     \Delta \phi_h =0.
% \end{align}
% Hence, we are left to solve
% \begin{align}
%     h^{-1/2} (i\partial_t + \Delta)a + 2 i h^{-3/2} \xi \cdot \nabla a - h^{-3/2} \alpha(x) a|a|^2=0
% \end{align}
In this subsection, we study the following initial value problem for $a_0$ which records the above mentioned nonlinear transport equation for $a_0$ and takes the form: 
\begin{align}\label{Equation a0}
    \begin{cases}
        h\partial_ta_0 + 2 \xi \cdot \nabla a_0 =-i\alpha(x) a_0 |a_0|^2 \\
        a_0(x, -hT, h) = \psi(x).
    \end{cases} 
\end{align}
We will achieve that by converting \eqref{Equation a0} into an initial value problem which can be solved by the method of characteristics
\footnote{Actually, the initial value problem \eqref{Equation a0} itself can be solved by the method of characteristics. However, to make the presentation cleaner, we first rescale it in the time variable.}.

Combining \eqref{3-aA} and \eqref{Equation a0} leads to 
\begin{align}\label{tag 1}
        \begin{cases}
        \partial_t A_0 + 2 \xi \cdot \nabla A_0 =-i\alpha(x) A_0 |A_0|^2 \\
        A_0(x, -T) = \psi(x).
    \end{cases} 
\end{align}
Now, we employ the method of characteristics to explicitly solve the above equation. Letting 
\begin{align}\label{b Definition}
b(t)= A_0(x+ 2\xi t,t)
\end{align}
we find that
\begin{align}\label{tag 2}
    \frac{d}{dt} b(t) = \left(\partial_t A_0 + 2\xi \cdot \nabla A_0\right)( x+2\xi t,t) = -i\alpha (x+2\xi t) b |b|^2.
\end{align}
This is a nonlinear, first order, non-autonomous ODE for the function $b(t)$. We first notice that 
\begin{align*}
    \frac{d}{dt} |b|^2 &= 2 \operatorname{Re}(\overline{b(t)} \frac{d}{dt} b(t)) \\
    &=2\operatorname{Re} \left(-i\alpha(x+2\xi t) |b|^4\right) \\
    &=0.
\end{align*}
Thus, we can replace $|b(t)|^2$ in \eqref{tag 2} with $|b(-T)|^2$,  and we obtain the following \textit{linear} ODE. 
\begin{align*}
    \frac{d}{dt} b = -i\alpha (x+2\xi t) |b(-T)|^2 b(t),
\end{align*}
whose solution is given by
\begin{align}\label{b equation}
    b(t) = b(-T) \exp \left(-i |b(-T)|^2 \int_{-T}^t \alpha(x+2\xi s)\,ds\right).
\end{align}
Note that by \eqref{b Definition} and \eqref{tag 1},
\begin{align*}
    b(-T) = A_0( x-2\xi T, -T) = \psi(x - 2\xi T).
\end{align*}
Hence, we find that by \eqref{b equation},
\begin{align}\label{b final}
   b(t)=  \psi(x- 2\xi T) \exp \left(-i |\psi(x - 2\xi  T)|^2 \int_{-T}^t \alpha(x+2\xi s)\,ds\right).
\end{align}
Now, we can reverse the process to solve for $A_0$, and then $a_0$. More precisely, by \eqref{b Definition} and \eqref{b final},
\begin{align*}
    A_0(t,x)= \psi(x  -2 \xi t -2\xi T) \exp \left(-i |\psi(x - 2\xi t - 2\xi  T)|^2 \int_{-T}^t \alpha(x-2\xi t +2\xi s)\,ds\right),
\end{align*}

which together with \eqref{3-aA} implies
\begin{align}\label{a_0 full equation}
    a_0(t,x,h) = \psi(x  -2 \xi t/h -2\xi T) \exp \left(-i |\psi(x - 2\xi t/h - 2\xi T)|^2 \int_{-T}^{t/h} \alpha(x-2\xi t/h +2\xi s)\,ds\right).
\end{align}
In order to control the higher order terms, we need to estimate the norm $\norm{a_0}_{C^0_t H_x^{d/2+2N+\delta+2} ([-Th, \infty]\times \R^d)}$ for $\delta>0$ and $N\ge 1$. We state the estimate in the following lemma.
\begin{lem}\label{Lemma for a_0}
    Let $d\in\{1,2,3\}$. Let $a_0(t,x,h)$ be as defined in \eqref{a_0 full equation}, $T, R>0$, $\operatorname{supp}(\psi)\subseteq B(0,R), \operatorname{supp}(\alpha)\subseteq B(0,T_0)\subseteq B(0,T)$, and $|\xi|=1$. Additionally, let $\delta(d)$ be defined as in the statement of Theorem \ref{Main Theorem}. We have the following global Sobolev bound
    \begin{align}\label{Lemma for a_0}
            \norm{ a_0(t,x))}_{C_t^0 H^{d/2+2N+\delta }_x([-Th, \infty)\times \R^d)} \le C(d,T,R, \norm{\alpha}_{W_x^{2N+2, \infty}(\R^d)}, \norm{\psi}_{W_x^{2N+2, \infty}(\R^d)}),
    \end{align}
for any $N\ge 1$.
\end{lem}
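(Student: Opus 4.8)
The plan is to exploit the explicit closed form \eqref{a_0 full equation} for $a_0$ and reduce the Sobolev bound to an $L^\infty$-type estimate on finitely many derivatives of the three ingredients: the amplitude $\psi$, the phase-weight $|\psi|^2$, and the line integral of $\alpha$. First I would observe that, because $\operatorname{supp}\psi\subseteq B(0,R)$, the function $x\mapsto\psi(x-2\xi t/h-2\xi T)$ is supported in a ball of fixed radius $R$ centered at $2\xi t/h+2\xi T$; hence at each fixed time $a_0(t,\cdot,h)$ is compactly supported in a translate of $B(0,R)$, and its $H^s_x$ norm is comparable (uniformly in $t,h$) to its $W^{s,\infty}_x$ norm on that ball, up to a constant depending only on $R$ and $s=d/2+2N+\delta$ (using $\lceil s\rceil\le 2N+2$ for the relevant range of $N,\delta,d$). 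So it suffices to bound $\sup_t\|a_0(t,\cdot,h)\|_{W^{2N+2,\infty}_x}$.

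Next I would write $a_0(t,x,h)=\psi(y)\exp(-i|\psi(y)|^2 \Phi(t,x,h))$ with $y=x-2\xi t/h-2\xi T$ and $\Phi(t,x,h)=\int_{-T}^{t/h}\alpha(x-2\xi t/h+2\xi s)\,ds$. The key point is that $\Phi$ is bounded with bounded $x$-derivatives uniformly in $t$ and $h$: after the substitution $\sigma=x-2\xi t/h+2\xi s$ the integral becomes $\tfrac12\int_{\gamma}\alpha$ along the segment from $x-2\xi t/h-2\xi T$ to $x$, which, since $\alpha$ is compactly supported in $B(0,T_0)$, is the same as integrating over a segment of length at most $2T_0$ inside $\operatorname{supp}\alpha$; differentiating in $x$ under the integral sign and using the fundamental-theorem-of-calculus trick that converts the $x$-derivative of the upper/varying endpoint into boundary terms (which vanish off $\operatorname{supp}\alpha$) plus interior terms controlled by $\|\alpha\|_{W^{k,\infty}}$, one gets $\|\partial_x^\beta\Phi(t,\cdot,h)\|_{L^\infty_x}\le C(T_0,|\beta|)\|\alpha\|_{W^{|\beta|-1,\infty}}$ for $1\le|\beta|\le 2N+2$, and $\|\Phi\|_{L^\infty}\le T_0\|\alpha\|_{L^\infty}$, all uniformly in $t,h$. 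Then I would apply the Faà di Bruno / Leibniz rule to differentiate the product $\psi(y)\exp(-i|\psi(y)|^2\Phi)$ up to order $2N+2$: each derivative hitting $\exp(\cdots)$ brings down a polynomial in derivatives of $|\psi(y)|^2$ (bounded by $\|\psi\|_{W^{2N+2,\infty}}$, with chain-rule factors from $y\mapsto x$ that are just constants since $\nabla_x y=\mathrm{Id}$) and derivatives of $\Phi$ (bounded as above), all multiplied by the modulus-one factor $\exp(-i|\psi(y)|^2\Phi)$; collecting terms yields a bound of the claimed form with $C=C(d,T_0,R,N,\|\alpha\|_{W^{2N+2,\infty}},\|\psi\|_{W^{2N+2,\infty}})$, and absorbing $T_0\le T$ gives the statement.

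I expect the main obstacle to be the uniform-in-$h$ control of the $x$-derivatives of the phase factor $\Phi$, specifically keeping track of the cancellation between the $x$-dependence appearing both in the integrand $\alpha(x-2\xi t/h+2\xi s)$ and in the upper limit $t/h$ of integration: naively differentiating in $x$ produces a term of size $\sim\alpha(\text{endpoint})$ and a term $\sim\int|\nabla\alpha|$, and one must verify these do not conspire to blow up as $h\to 0$ — they do not, precisely because the arclength over which $\alpha$ is nonzero is at most $2T_0$, independent of $h$, so the change of variables to the segment parametrization decouples the $h$-dependence entirely. Once that reduction is made the remaining work is a bookkeeping exercise with Leibniz and chain rules, routine but tedious, which I would carry out in Appendix \ref{Appendix} or inline depending on space.
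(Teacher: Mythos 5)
Your proposal is correct and follows essentially the same route as the paper: write $a_0=\psi(y)\exp(-i|\psi(y)|^2\Phi)$, use the compact support in a translate of $B(0,R)$ to pass from the $H^{d/2+2N+\delta}_x$ norm to $L^\infty_x$ bounds on finitely many derivatives, and observe that the $s$-integral defining $\Phi$ is effectively restricted (by the supports of $\alpha$ and $\psi$) to an interval of length $O(R+T)$ independent of $h$, so all $x$-derivatives of $\Phi$ are bounded by $\|\alpha\|_{W^{k,\infty}}$ uniformly in $t$ and $h$, after which Leibniz bookkeeping finishes the argument exactly as in the paper. One small simplification over what you anticipate: the upper limit $t/h$ of the integral does not depend on $x$, so the endpoint cancellation you flag as the main obstacle never arises --- differentiation in $x$ simply passes under the integral sign.
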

\begin{proof}
We will provide details of such an estimate in the case when $N=1$. The other cases can be done in a similar fashion. \par

 Note that for $d\in\{2,3\}$, we have that $d/2+2+\delta\le 3/2+2+\delta\le 4$ for $0\le \delta\le 1/2$. If $d=1$, then with $1/2\le \delta <1$, we have that $d/2+2+\delta \le 1/2+2+1\le 4$ Thus, in any case,
\begin{align}\label{H^4 estimate for a_0}
    \norm{a_0}_{L^\infty_t H_x^{d/2+2+\delta+2} [-Th, \infty]\times \R^d)}\lesssim \norm{a_0}_{L^\infty_t L^2_x [-Th, \infty]\times \R^d)} + \sum_{\alpha, |\alpha|=4} \norm{\partial^\alpha  a_0}_{L^\infty_t L^2_x [-Th, \infty]\times \R^d)},
\end{align}
where the sum is over all multi-indices $\alpha$ such that $|\alpha|=4$.
Let us write 
\begin{align}
    a_0=a_0(t,x,h)= \varphi \exp\left(-i\chi\right),
\end{align}
where 
% $\varphi , \chi$ can be calculated from the expression \eqref{a_0 full equation} for $a_0$.
\begin{align}
    \varphi(t,x,h) &:= \psi(x  -2 \xi t/h -2\xi T) \\
    \chi(t,x,h) &:= |\psi(x - 2\xi t/h - 2\xi T)|^2 \int_{-T}^{t/h} \alpha(x-2\xi t/h +2\xi s)\,ds.
\end{align}
For simplicity of exposition, let us focus on multi-indices $\alpha$ such that $\partial^\alpha=\partial_{x_l}^4$ for some $1\le l\le d$ . Note that in this case, 
\begin{align}\label{Product rule on a_0}
\partial^4_{x_l} a_0 = \sum_{j=0}^4 {4 \choose j} \partial_{x_l}^{j} \varphi \,\partial_{x_l}^{4-j} \exp\left(-i\chi\right) =: \exp(-i\chi) \left(\partial^4_{x_l}  \varphi + \sum_{j=0}^{3} {4\choose j}\partial^{j} \varphi \,P_j(\partial_{x_l}\chi, \dots, \partial_{x_l}^4 \chi)\right)
    % \partial_x a_0 &= (\varphi_x-i\varphi \chi_x)\exp(-i\chi) \\
    % \partial_{xx} a_0 &= (\varphi_{xx}-i\varphi \chi_{xx} -i \varphi_x \chi_x)\exp(-i\chi) -i \chi_x (\varphi_x-i\varphi \chi_x)\exp(-i\chi),
\end{align}
where $P_j$ is some polynomial such that for each $0\le j\le 3$, $P_j$ is non-constant. Our strategy will be as follows: since $\partial_{x_l}^4 a_0$ is compactly supported in a ball of radius $R$ for each $t$, we estimate, by the triangle inequality,
\begin{align}
    \norm{\partial_{x_l}^4 a_0}_{L^2_x}\le C(d) R^{d/2} \norm{\partial_{x_l}^4 a_0}_{L^\infty_x}
\end{align}

Applying the triangle inequality to \eqref{Product rule on a_0},
\begin{align}\label{triangle inequality for a_0 estimate}
    \norm{\partial_{x_l}^4 a_0}_{L^\infty_x} \le \norm{\partial_x^4 \psi}_{L^\infty_x} + \sum_{j=0}^3 {4\choose j} \norm{\partial_{x_l}^j \varphi \,P_j(\partial_{x_l}\chi, \dots, \partial_{x_l}^4 \chi)}_{L^\infty_x}.
\end{align}
To estimate the terms in the summand, we need to understand to which region the integral is restricted to as both $\psi$ and $\alpha$ are compactly supported and appear in the integrand. Let us present the case $j=3$ as the others are quite similar. Note that, using our hypotheses on the supports of $\psi$ and $\alpha$, 
\begin{align}
    |\partial_{x_l}^3\varphi  P_3(\chi)| &= \left|\partial_{x_l}^3\varphi \int_{-T}^{t/h}\left( 2 \re (\overline{\varphi} \partial_{x_l} \varphi)  \alpha(x(s)) + |\varphi|^2  \partial_{x_l} \alpha(x(s))\right) \,ds\right| \\
    &\lesssim \norm{\varphi}_{W^{3,\infty}_x}^3 \int_{-T}^{t/h}  1_{|x-2\xi t/h-2\xi T|\le R}(|\alpha_{x_l}(x(s))| + |\alpha(x(s))|) \\
    &\le \norm{\varphi}_{W^{3,\infty}_x}^3 \norm{\alpha}_{W^{1, \infty}}
     \int_{-\infty}^{\infty}  1_{|x-2\xi t/h-2\xi T|\le R}1_{\{|x-2\xi t /h +2\xi s|\le T\}}\, ds \\
     &= \norm{\psi}_{W^{3,\infty}_x}^3
     \norm{\alpha}_{W^{1, \infty}} |A_{T, R,\xi}| \label{Estimate 1 for a_0}
\end{align}
% where we have used the facts that $\psi$ is supported in $B(0,R)$ and 
% $\psi\in W^{3,\infty}_x(\R^d)$, and $x(s)$ denotes $x-2\xi t/h+2\xi s$.

% The delicate terms to estimate will be the ones in the summand where at least one factor of $\chi_x$ or a derivative thereof appears as the naive bound
% \begin{align}
%     \int_{-T}^{t/h} \alpha(x-
% \end{align}

% We recall that $\psi$ is compactly supported such that its support lies in $B(0,R)$. Thus, any integral of $a_0$ will be restricted to the region $|x-2\xi t/h-2\xi t|\le R$. If we differentiate $a_0$ with respect to $x$, then we will have to estimate, for example, 

% We now estimate the second term on the right hand side of \eqref{Estimate 1 for a_0} and remark that the first term can be treated similarly. We wish to estimate 
% % In order to recursively solve for $a_j$, $j \in \{1, \dots, N\}$, we shall find an estimate on 
% \begin{align*}
%     \int_{-T}^{t/h}1_{|x-2\xi t/h-2\xi T|\le R}(x) |\alpha(x-2\xi t/h+2\xi s)|\,ds
% \end{align*}
% for large values of $t/h$ and independently of $h$. 

 % Now, if $x\in  B(2\xi t/h+2\xi T,R)$, then let us find the range of values of $s$ such that $x(s)=x-2\xi t/h+2\xi s$ stays inside $B(0,T)$, wherein the support of $\alpha$ lies. If we let $y=x-2\xi t/h$, we estimate the size of the set
where 
\begin{align*}
    A_{T, R,\xi} := \{s:  |y-2\xi T|\le R \text{ and } |y+2\xi s|\le T\}.
\end{align*}
for $y=x-2\xi t/h$.
If $s\in A_{T,R, \xi}$, by the triangle inequality, 
\begin{align*}
    |2\xi T+2\xi s| &= |(y-2\xi T)-(2\xi s+y)| \\
    &\le R+T.
\end{align*}
Therefore, since $|\xi|=1$,
\begin{align*}
    A_{T, R, \xi} \subseteq \left\{s: |s+T|\le \frac{R+T}{2}\right\}
\end{align*}
and
\begin{align*}
    |A_{T,R, \xi}|\le R+T.
\end{align*}
% Clearly, we must have
% \begin{align}
%     2|\xi| |t/h-s|\le  2R, 
% \end{align}
% since otherwise we can show that $x-2\xi(t/h-s)\notin B(2\xi t/h-2\xi T,R)$. Thus, 
% \begin{align}
%     -\frac{R}{|\xi|} + t/h\le s\le \frac{R}{|\xi|} + t/h. 
% \end{align}

% Thus, we can bound
% \begin{align*}
%     \int_{-T}^{t/h}1_{|x-2\xi t/h-2\xi T|\le R}(x) |\alpha(x-2\xi t/h+2\xi s)|\,ds &\le \int_{-\infty}^{\infty }1_{|x-2\xi t/h-2\xi T|\le R}(x) |\alpha(x-2\xi t/h+2\xi s)|\,ds\\
%     &\le \norm{\alpha}_{L^\infty_x(\R^d)} |A_{T, R, \xi}| \\
%     &\le \frac{R+T}{|\xi|} \norm{\alpha}_{L^\infty_x(\R^d)}
% \end{align*}
% independently of $t,h$.

Hence, \eqref{Estimate 1 for a_0} implies 
\begin{align}
    \norm{\partial_x^3 \varphi P_3}_{L^\infty_x } \lesssim \norm{\psi}_{W^{3, \infty}_x}^3 (R+T) \norm{\alpha}_{W^{1,\infty}_x(\R^d)}.
\end{align}Similarly, we can show that for each $j$, 
\begin{align}
    \norm{\partial_x^{j} \varphi P_j}_{L^\infty_x} \le C( R, T,\norm{\alpha}_{W^{4,\infty}_x(\R^d)},\norm{\psi}_{W^{4, \infty}_x} ).
\end{align}
Therefore, recalling \eqref{triangle inequality for a_0 estimate}, we obtain 
\begin{align}
    \sup_{t\in[-Th, \infty)}\norm{\partial_{x_l}^4 a_0}_{L^2_x}\le C(d,R, T,\norm{\alpha}_{W^{4,\infty}_x(\R^d)},\norm{\psi}_{W^{4, \infty}_x} )
\end{align}
and the same estimate holds for $\partial^\alpha a_0$ for any multi-index $\alpha$ such that $|\alpha|=4$.
% Now, note that $\frac{d}{2}+2+\delta \le3/2+2+\delta\le 4$ if $0<\delta<1/2$ since $d\le 3$.

% Hence the $H^{d/2+2+\delta}_x(\R^d)$ norm of $a_0$

% is controlled by the $H^4_x(\R^d)$ norm of $a_0$. 
On the other hand, by applying the triangle inequality to \eqref{a_0 full equation}, we can easily obtain the following bound on the $L^2$ norm of $a_0$ 
\begin{align*}
    \norm{a_0}_{L^\infty_t L^2_x([-Th, \infty)\times \R^d)} \le C(d)\norm{\psi}_{L^\infty_x} R^{d/2}.
\end{align*}
%  By our above calculations we have that any multi-index $\alpha$ with $|\alpha|=4$,
% \begin{align*}
%     \norm{\partial^{\alpha} a_0(t,x)}_{C_t^0L^2_x([-Th, \infty)\times \R^d)}\le C(d,T,R, \norm{\alpha}_{W_x^{4, \infty}(\R^d)}, \norm{\psi}_{W_x^{4, \infty}(\R^d)}).
% \end{align*}
Hence, going back to \eqref{H^4 estimate for a_0}, we obtain
\begin{align*}
    \norm{ a_0(t,x))}_{L^\infty_t H^{d/2+2+\delta }_x([-Th, \infty)\times \R^d)} &\le \norm{ a_0(t,x))}_{L^\infty_t H^{4}_x([-Th, \infty)\times \R^d)} \\
    &\le C(d,T,R, \norm{\alpha}_{W_x^{4, \infty}(\R^d)}, \norm{\psi}_{W_x^{4, \infty}(\R^d)}).
\end{align*}
Similarly, for $N\ge 1$ a positive integer,
\begin{align} \label{a_0 H^s}
    \norm{ a_0(t,x))}_{L^\infty_t H^{d/2+2N+\delta }_x([-Th, \infty)\times \R^d)} \le C(d,T,R, \norm{\alpha}_{W_x^{2N+2, \infty}(\R^d)}, \norm{\psi}_{W_x^{2N+2, \infty}(\R^d)}).
\end{align}
Note that we can actually prove that $a_0(t,x)\in C_t^0 H^{d/2+2N+\delta }_x$ with similar bounds above.
\end{proof}
\begin{rmk}\label{Remark on C}
    From now on, we will call the constant appearing in \eqref{Lemma for a_0} $C(d, T, R, \alpha, \psi, N)$. 
\end{rmk}

\subsection{Higher order terms} \label{higher order terms Section}

For the equation at order $h^{-3/2}$ in \eqref{Family}, we already explicitly solved the nonlinear transport equation
\begin{align*}
    \begin{cases}
        ih\partial_t a_0 + 2i\xi \cdot \nabla a_0 -\alpha(x) a_0 |a_0|^2 =0 \\
        a_0(-Th, x) = \psi(x).
    \end{cases}
\end{align*}
At order $h^{-1/2}$, by \eqref{Family}, we must solve
\begin{align*}
        \begin{cases}
        ih\partial_t a_1  + i 2\xi\cdot \nabla a_1 +\Delta a_0 - \alpha(x) (2a_0 a_0 \overline{a_1} + 2a_0\overline{a_0}a_1) = \\
        a_1(-Th, x) = 0.
    \end{cases}
\end{align*}
At order $h^{1/2}$, by \eqref{Family}, we must solve
\begin{align} \label{equation for a_2}
        \begin{cases}
        ih\partial_t a_2 + i2\xi\cdot \nabla a_2  +\Delta a_1 - \alpha(x) g_2(a_0, a_1, a_2, \overline{a_0},\overline{a_1},\overline{a_2} )=0 \\
        a_2(-Th, x) = 0,
    \end{cases}
\end{align}
etc.
% Note that $a_2(t,x)\equiv 0$ is a solution, and is the only one if uniqueness is true.  \newline
% At order $h^{3/2}$, by \eqref{Family}, we must solve
% \begin{align}
%         \begin{cases}
%         (i\partial_t + \Delta) a_3  + i 2\xi\cdot \nabla a_3 - \alpha(x)\left(f_1(a_0, \overline{a_0}, a_1, \overline{a_1}, a_2, \overline{a_2})a_3  +f_2(a_0, \overline{a_0}, a_1, \overline{a_1}, a_2, \overline{a_2})\overline{a_3} \right)=-\Delta a_2 \\
%         a_3(-Th, x) = 0.
%     \end{cases}
% \end{align}
% Note that if $a_2(t,x)\equiv 0$, then $a_3\equiv 0$ is a solution.\newline
% At order $h^{5/2}$, by \eqref{Family}, we must solve
% \begin{align}
%         \begin{cases}
%         ih\partial_t a_4 + i2\xi\cdot \nabla a_4 - \alpha(x) \left(h_1(a_0, \overline{a_0}, a_1, \overline{a_1}, a_2, \overline{a_2}, a_3, \overline{a_3})a_4  +f_2(a_0, \overline{a_0}, a_1, \overline{a_1}, a_2, \overline{a_2}, a_3, \overline{a_3})\overline{a_4}\right)=0 \\
%         a_4(-Th, x) = 0.
%     \end{cases}
% \end{align}
% Again, $a_4(t,x)\equiv 0$ is a solution. Continuing in this way, if we have uniqueness for each equation, we can show that
% \begin{align}
%     a_k(t,x) \equiv 0, k\ge 2.
% \end{align}
% This would mean that if we solve a finite number $N$ of these equations for $a_j$, the function
% \begin{align}
%     a(t,x)= a_0(t,x) + ha_1(t,x)
% \end{align}
%  solves the equation for the amplitude $a(t,x)$ up to an error with $L^\infty_x$ amplitude proportional to $h^{N-3/2}$. 

Let us first define our notion of strong solution for the equation for $a_k, k\ge 2$.
\begin{defn}
    We say that $a_k(t,x)$ is a strong $C^0_t H^{d/2+\delta}(I\times \R^d)$  solution of
    \begin{align}
        \begin{cases}
            h\partial_t a_k + 2\xi \cdot \nabla a_k=F(t,x, a_k(t,x)) \\
            a_k(x,-Th)=0
        \end{cases}
        \end{align}
        on an interval $I$ containing $-Th$ if the rescaled function defined by $A_k(t/h,x):=a_k(t,x)$ satisfies $A_k(t,x)\in C^0_t H^{d/2+\delta}(Ih\times \R^d)$ and for each $t'\in hI$
        \begin{align}
            A_k(t',x(t')) =- \int_{-T}^{t'} F(s', x(s'), A_k(s',x(s')))\,ds,
        \end{align}
        where $x(t)=x+2\xi t$, $t'=t/h, s'=s/h$.
\end{defn} 
Let us  now tackle the equation at order $h^{-1/2}$ and state our result as a proposition.
\begin{prop}
    Let $d\in\{1,2, 3\}$. Given  $T>0$, there exists a unique $C_t^0 H^{d/2+\delta}_x$ strong, global  solution $a_1(t,x)$ to the equation
    \begin{align}\label{a1 equation higher order terms}
        \begin{cases}
        ih\partial_t  a_1  + i 2\xi\cdot \nabla a_1 - \alpha(x) (2a_0 a_0 \overline{a_1} + 2a_0\overline{a_0}a_1) =-\Delta a_0 \\
        a_1(-Th, x) = 0.
    \end{cases}
    \end{align}
    on the interval $[-Th,\infty)$ (and the interval $(-\infty, -Th)$ by time-reversal symmetry) with the bound
    \begin{align*}
            \norm{a_1(t,x)}_{C_t^0 H^{d/2+\delta}_x([-hT, hT]\times  \R^d)} &\le C(d,T, \alpha, \psi, 1) \exp\left(2T C(d,\delta)^3 \norm{\alpha}_{H^{d/2+\delta}_x(\R^d)} C(d,T, \alpha, \psi, 1)^2\right)
    \end{align*}
    on the interval $[-Th, Th]$.
\end{prop}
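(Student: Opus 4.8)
Since \eqref{a1 equation higher order terms} is \emph{linear} in $a_1$ once $a_0$ is known, the plan is to solve it by the method of characteristics of the constant-coefficient transport field $2\xi\cdot\nabla$ and then to upgrade the resulting pointwise solution to the quantitative $H^{d/2+\delta}_x$ bound by an energy estimate. First I would multiply \eqref{a1 equation higher order terms} by $-i$ and record it as $h\partial_t a_1 + 2\xi\cdot\nabla a_1 = F(t,x,a_1)$, $a_1(-Th,\cdot)=0$, where
\[
F(t,x,w) := -2i\,\alpha(x)\left(a_0^2\,\overline{w} + |a_0|^2 w\right) + i\,\Delta a_0 .
\]
Here $F$ is affine in $w$ over $\R$, its linear part has $L^\infty_x$ operator norm $\lesssim \norm{\alpha}_{L^\infty}\norm{a_0}_{L^\infty_{t,x}}^2$, and the forcing $i\Delta a_0$ lies in $C^0_tH^{d/2+\delta}_x$ with $\norm{\Delta a_0}_{C^0_t H^{d/2+\delta}_x}\le\norm{a_0}_{C^0_t H^{d/2+2+\delta}_x}\le C(d,T,\alpha,\psi,1)$ by Lemma \ref{Lemma for a_0} with $N=1$ (and likewise $\norm{a_0}_{C^0_tH^{d/2+\delta}_x}\le C(d,T,\alpha,\psi,1)$).

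After the rescaling \eqref{3-aA}, writing $A_1(\tau,x)=a_1(h\tau,x)$, the equation becomes $\partial_\tau A_1 + 2\xi\cdot\nabla A_1 = F(h\tau,x,A_1)$ with $A_1(-T,\cdot)=0$, which is the transport problem appearing in the definition of strong solution preceding the statement. For each fixed base point $x$, the function $B_x(\tau):=A_1(\tau,x+2\xi\tau)$ solves the linear non-autonomous ODE
\[
\dot B_x(\tau) = -2i\,\alpha(x+2\xi\tau)\left(A_0(\tau,x+2\xi\tau)^2\,\overline{B_x(\tau)} + |A_0(\tau,x+2\xi\tau)|^2\,B_x(\tau)\right) + i\,\Delta A_0(\tau,x+2\xi\tau),\qquad B_x(-T)=0,
\]
whose coefficients and forcing are bounded on all of $\R$, since $\alpha\in C_c^\infty$ and $a_0$ is globally bounded by \eqref{a_0 full equation} and Lemma \ref{Lemma for a_0}. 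Standard linear ODE theory then gives a unique global $B_x$, hence a unique global $A_1$ and $a_1$ on $[-Th,\infty)$ (and on $(-\infty,-Th)$ by time reversal); differentiating the ODE in the base point $x$ up to order $\lceil d/2+\delta\rceil$ (using that $a_0,\alpha$ are smooth) shows $A_1(\tau,\cdot)$ has the claimed Sobolev regularity and is $C^1$ in $\tau$, and $A_1(\tau,\cdot)$ is supported in a fixed ball for $\tau$ in a bounded interval (compact support of $a_0$ and $\Delta a_0$ propagated along characteristics), so there is no difficulty at spatial infinity and the energy identity below is legitimate. Uniqueness in $C^0_tH^{d/2+\delta}_x$ follows from the $L^2$ version of that identity applied to the difference of two solutions.

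For the quantitative bound, set $s=d/2+\delta$ and compute $\frac{d}{d\tau}\norm{A_1(\tau)}_{H^s_x}^2 = 2\re\langle \langle\nabla\rangle^s A_1, \langle\nabla\rangle^s\partial_\tau A_1\rangle_{L^2}$. The constant-coefficient transport term contributes nothing, since $\langle\nabla\rangle^s$ commutes with $\nabla$ and $\int \xi\cdot\nabla|\langle\nabla\rangle^s A_1|^2 = 0$. In the term $-2i\alpha|a_0|^2 A_1$ the ``diagonal'' piece $-2i\alpha|a_0|^2\langle\nabla\rangle^s A_1$ pairs with $\langle\nabla\rangle^s A_1$ to give a purely imaginary integral, hence contributes $0$, while the commutator $[\langle\nabla\rangle^s,\alpha|a_0|^2]A_1$ is controlled by the Kato--Ponce/Moser product estimate; the term $-2i\alpha a_0^2\overline{A_1}$ (no cancellation needed) is controlled directly in $H^s$. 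Both of these use that $s>d/2$, so that $H^s(\R^d)$ is a Banach algebra, and this is where the hypothesis $\delta>0$ (and $\delta\ge 1/2$ for $d=1$) enters. Combined with $\langle A_1,i\Delta a_0\rangle_{H^s}\le\norm{A_1}_{H^s}\norm{\Delta a_0}_{H^s}$, this yields
\[
\frac{d}{d\tau}\norm{A_1(\tau)}_{H^s}^2 \lesssim C(d,\delta)^3\,\norm{\alpha}_{H^s}\norm{a_0}_{H^s}^2\,\norm{A_1(\tau)}_{H^s}^2 + \norm{\Delta a_0(\tau)}_{H^s}\,\norm{A_1(\tau)}_{H^s},
\]
the cube of the algebra constant $C(d,\delta)$ reflecting the quartic product $\alpha\cdot a_0\cdot a_0\cdot A_1$. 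Dividing by $2\norm{A_1}_{H^s}$ and applying a Gr\"onwall argument on $[-T,T]$ with $A_1(-T)=0$, together with the bounds $\norm{a_0}_{H^s},\norm{\Delta a_0}_{H^s}\le C(d,T,\alpha,\psi,1)$ from Lemma \ref{Lemma for a_0}, gives
\[
\norm{a_1}_{C^0_tH^s_x([-hT,hT]\times\R^d)} = \norm{A_1}_{C^0_\tau H^s_x([-T,T]\times\R^d)} \le C(d,T,\alpha,\psi,1)\exp\left(2T\,C(d,\delta)^3\norm{\alpha}_{H^{d/2+\delta}_x(\R^d)}\,C(d,T,\alpha,\psi,1)^2\right),
\]
which is exactly the claimed estimate. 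I expect the main technical point to be the Moser/commutator bookkeeping in this last step, since it is what pins down the precise form of the stated constant; the construction of $a_1$ and the Gr\"onwall argument are otherwise routine.
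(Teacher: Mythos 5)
Your proposal is correct, but it follows a genuinely different route from the paper on both the existence and the estimate. For existence and uniqueness, the paper does not solve a decoupled ODE at each base point; it sets up the Duhamel integral formulation along characteristics and runs a Banach fixed-point argument in the unit ball of $C^0_t H^{d/2+\delta}_x([-T,-T+S]\times\R^d)$ for a short time $S$ depending only on $\alpha,\psi,T,d,\delta$, then iterates globally using that the equation is linear in $A_1$ (so the local existence time is independent of the size of the data). Your characteristic-by-characteristic ODE argument is more elementary and yields global existence in one stroke, at the cost of having to argue separately (by differentiating the ODE in the base point and tracking the support) that the resulting $A_1$ actually lies in $C^0_tH^{d/2+\delta}_x$ — a step the contraction argument gets for free since it works in that space from the start. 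For the quantitative bound, the paper simply takes the $H^{d/2+\delta}_x$ norm of the Duhamel formula, uses the translation invariance of Sobolev norms under the characteristic flow $x\mapsto x+2\xi s$ together with the algebra property of $H^{d/2+\delta}$ (this is exactly where the factor $C(d,\delta)^3$ comes from, one factor per product in $\alpha\cdot A_0\cdot A_0\cdot A_1$), and applies Gr\"onwall to the integral inequality; no energy identity, no commutators. Your $\frac{d}{d\tau}\|A_1\|_{H^s}^2$ computation with the vanishing transport contribution, the antisymmetric cancellation of the diagonal piece of $-2i\alpha|a_0|^2A_1$, and Kato--Ponce for the commutator is sound (and the energy identity is legitimate here because the characteristics construction gives a spatially smooth, compactly supported solution), but it is heavier machinery than needed, and the constant it produces in the exponent is a Kato--Ponce/Moser constant rather than literally the cube of the algebra constant appearing in the statement — harmless, since the statement's constants are generic, but worth noting that the exact form $C(d,\delta)^3\|\alpha\|_{H^{d/2+\delta}}C(d,T,\alpha,\psi,1)^2$ drops out most directly from the paper's Gr\"onwall-on-Duhamel argument.
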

\begin{proof}
Let $A_1, A_0$ be as defined in \eqref{3-aA}, that is $a_1(t,x)=A_1(t/h,x)$, $a_0(t,x)=A_0(t/h,x)$. Then, equation \eqref{a1 equation higher order terms} takes the equivalent form
\begin{align}\label{A_1 equation}
           \begin{cases}
        \partial_{t'}  A_1  +  2\xi\cdot \nabla A_1 +i \alpha(x) (2A_0 A_0 \overline{A_1} + 2A_0\overline{A_0}A_1) =i\Delta A_0 \\
        A_1(-T, x) = 0.
    \end{cases}
\end{align}
with $A_1, A_0$ evaluated at $t'=t/h$. Let $$A_1(t', x(t')):=A_1(t', x+2\xi t').$$
Then, by the chain rule, 
\begin{align*}
    \frac{d}{dt'} A_1(t', x(t'))= (\partial_{t'} A_1 + 2\xi \cdot \nabla A_1)\big|_ {(t', x+2\xi t')}.
\end{align*}
Hence, performing the change of variable  $x\mapsto x+2\xi t'$ on the equation \eqref{A_1 equation}, we obtain the equation
\begin{align}\label{translated equation for A1}
        \begin{cases}
        \frac{d}{dt'} A_1(t', x(t'))    = -4i\alpha(x+2\xi t')  A_0(t',x+2\xi t') \re (A_0(t', x+2\xi t') \overline{A_1(t', x(t'))}) +i\Delta A_0(t',x+2\xi t') \\
        A_1(-T,x(-T)) = 0.
    \end{cases}
\end{align}
In Duhamel form, \eqref{translated equation for A1} takes the form
\begin{align} \label{Duhamel A_1}
    A_1(t', x(t'))= -i\int_{-T}^{t'}\left(4\alpha(x+2\xi s)  A_0(s,x+2\xi s) \re (A_0(s, x+2\xi s) \overline{A_1(s, x(s)}) -\Delta A_0(s,x+2\xi s)\right)\,ds
\end{align}
and we are looking for a fixed point of $G$, where $G$ is defined as 
\begin{align*}
    G[ A_1(t',x(t'))]\equiv  -i\int_{-T}^{t'} \left(4\alpha(x+2\xi s)  A_0(s,x+2\xi s) \re (A_0(s, x+2\xi s) \overline{A_1(s,x(s))}) -\Delta A_0(s,x+2\xi s)\right)\,ds.
\end{align*}
We proceed via Banach's-fixed point theorem. Define the Banach space
\begin{align*}
    B := \{u\in C^0_t H_x^{d/2+\delta} ([-T, -T+S) \times \R^d): \norm{u}_{C^0_t H^{d/2+\delta} ([-T, -T+S) \times \R^d)}\le  1\}
\end{align*}
for $S>0$ and let $A_1\in B$. Then, by Lemma \ref{Algebra property} and Lemma \ref{Lemma for a_0} (which we apply to $A_0(t',x)$ by a rescaling in time)
% and assuming that $S\le 4Th$ (this is harmless for the recovery as we will only need the solution to exist for a length of time $2Th$),
we estimate the $C^0_t H^{d/2+\delta} ([-T, -T+S) \times \R^d)$ norm of $G[ A_1]$ as follows:
\begin{align*}
    &\norm{G[ A_1]}_{L^\infty_t H^{d/2+\delta} ([-T, -T+S) \times \R^d)} \\
    &\le 4S \,C(d, \delta)^3 \norm{\alpha}_{H^{d/2+\delta}(\R^d)} \norm{A_0}_{C^0_t H^{d/2+\delta} [-T, -T+S]\times \R^d)}^2 \\
    &+ S \norm{A_0}_{C^0_t H^{d/2+2+\delta} ([-T, -T+S] \times \R^d)} \\
    &\le  4S \,C(d, \delta)^3\norm{\alpha}_{H_x^{d/2+\delta}(\R^d)} C(d, T, R, \alpha, \psi, 1)^2 + S \,C(d, T, R, \alpha, \psi, 1) \\
    &\le 1,
\end{align*}
where the last inequality holds provided that
\begin{align*}
    0\le S \le S^*\equiv C(d, T, R,  \alpha, \psi, 1)^{-1} \min\left(\frac{1}{8}C(d, \delta)^{-3}\norm{\alpha}_{H_x^{d/2+\delta}}^{-1}C(d, T, R, \alpha, \psi, 1)^{-1}, \frac{1}{2}\right)
\end{align*}
with $C(d, T, R, \alpha, \psi, N)$ as in Remark \ref{Remark on C}. \par 
We next show that the quantity $\norm{G[A_1(t', x(t'))]}_{H^{d/2+\delta}_x(\R^d)} $ is continuous in $t'$. First note that by a corollary of the triangle inequality for Banach spaces,  
\begin{align*}
    |\norm{G[A_1(t_1, x(t_1))]}_{H^{d/2+\delta}_x(\R^d)}-\norm{G[A_1(t_2, x(t_2))]}_{H^{d/2+\delta}_x(\R^d)}| \le \norm{G[A_1(t_1, x(t_1))-G[A_1(t_2, x(t_2))}_{H^{d/2+\delta}_x(\R^d)}.
\end{align*}
Now, we can use Duhamel's formula to estimate the above quantity as follows, for $t_1, t_2\in[-T,-T+S^*]$, 
\begin{align*}
    \norm{G[A_1(t_1, x(t_1))-G[A_1(t_2, x(t_2))}_{H^{d/2+\delta}_x(\R^d)} \le |t_1-t_2|\left(C(d,\delta)^3 C(d, T, R, \alpha, \psi, 1)^2 +  C(d, T, R, \alpha, \psi, 1)\right).
\end{align*}
% and $0\le S\le 4Th$. Note that for small enough $h$, this condition is equivalent to just asking that $0\le S\le 4Th$.
Thus, $\norm{G[A(t', x(t'))]}_{H^{d/2+\delta}_x(\R^d)}$ is Lipschitz-continuous in $t'$ and thus continuous in $t'$. 
Hence, for $S$ as above,
\begin{align*}
    G: B\to B.
\end{align*}
Now, we have that for $A_1,A_2 \in B$, by Duhamel and Proposition \ref{Algebra property} again,
\begin{align*}
    \norm{G[A_1]- G[A_2]}_{C^0_t H^{d/2+\delta} ([-T, -T+S] \times \R^d)} &\le S \,C(d, \delta)^3\norm{\alpha}_{H^{d/2+\delta}(\R^d)}\norm{A_0}_{C^0_t H^{d/2+\delta} ([-T, -T+S]\times \R^d)}^2 \norm{A_1-A_2}_{C^0_t H^{d/2+\delta} ([-T, -T+S] \times \R^d)}\\
    &\le \frac{1}{2} \norm{A_1-A_2}_{C^0_t H^{d/2+\delta} ([-T, -T+S] \times \R^d)}.
\end{align*}
where the last inequality holds as long as
\begin{align*}
    0\le S\le  \frac{1}{2} C(d,\delta)^{-3} C(d, T, \alpha, \psi, 1)^{-2} \norm{\alpha}_{H_x^{d/2+\delta}}^{-1},
\end{align*}
which is satisfied provided $0\le S\le S^*$.

Hence, we have shown that $G$ is a contraction for small enough values of $S$. Thus, for $0\le S\le S^*$, by the Banach fixed-point theorem, there exists a unique solution $A_1(t, x(t))$ to the desired equation \eqref{Duhamel A_1} in $B$ on the time interval $[-T, -T+S]$ and thus we have a  solution on the interval $[-T, -T+S^*]$. Thus, we can find a strong solution $A_1(t',x)=A_1(t', x(t')-2\xi t')\in B$ to  equation \eqref{A_1 equation} on this time interval. \par 
Note that equation \eqref{A_1 equation} is linear in $A_1$ up to complex conjugation. Thus, the linear theory iterates to give a global solution since the time of existence of the solution is independent of the $H^{d/2+\delta}_x$ norm of the initial data. Note that by taking the $H^{d/2+\delta}_x$ norm of both sides of the  Duhamel formulation \eqref{Duhamel A_1}, we have by the triangle inequality, and the translation invariance of Sobolev norms,
\begin{align*}
\norm{A_1(t',x)}_{H^{d/2+\delta}_x(\R^d)}
% &\le \norm{A_0(t',x)}_{H^{d/2+\delta+2}_x(\R^d)} + C(d,\delta)^3 \int_{-T}^{t'} \norm{\alpha}_{H^{d/2+\delta}_x(\R^d)} C(d,T,R, \alpha, \psi, 1)^2 \norm{A_1(s,x)}_{H^{d/2+\delta}_x(\R^d)}\,ds \\
&\le C(d,T, R,\alpha, \psi, 1)  + C(d,\delta)^3 \int_{-T}^{t'} \norm{\alpha}_{H^{d/2+\delta}_x(\R^d)} C(d,T,R, \alpha, \psi, 1)^2 \\
&\times \norm{A_1(s,x)}_{H^{d/2+\delta}_x(\R^d)}\,ds,
\end{align*}
which by Grönwall's inequality leads to the unconditional bound
\begin{align*}
    \norm{A_1(t',x)}_{H^{d/2+\delta}_x(\R^d)} &\le C(d,T,R, \alpha, \psi, 1) \exp\left(\int_{-T}^{t'} C(d,\delta)^3 \norm{\alpha}_{H^{d/2+\delta}_x(\R^d)} C(d,T, R, \alpha, \psi, 1)^2\right) \\
    &=C(d,T, R, \alpha, \psi, 1) \exp\left((t'+T) C(d,\delta)^3 \norm{\alpha}_{H^{d/2+\delta}_x(\R^d)} C(d,T, R, \alpha, \psi)^2\right)
\end{align*}
for $t'\in[-T, \infty ]$. Thus, undoing the rescaling $t'=t/h$, we have that 
\begin{align*}
    \norm{a_1(t,x)}_{H^{d/2+\delta}_x(\R^d)} \le C(d,T,R,  \alpha, \psi, 1) \exp\left((t/h+T) C(d,\delta)^3 \norm{\alpha}_{H^{d/2+\delta}_x(\R^d)} C(d,T, R, \alpha, \psi, 1)^2\right)
\end{align*}
for $t\in [-Th, \infty)$. Hence, taking the supremum over $[-Th, Th]$, we obtain the bound
% , but by a standard argument, we can continue the fixed-point solution as long as the $H^{d/2+\delta}_x(\R^d)$ norm of $A_1$ is finite, hence the solution $A_1(t,x)$ is global, hence so is $a_1(t,x)=A_1(t/h,x)$

\begin{align*}
    \norm{a_1(t,x)}_{C_t^0 H^{d/2+\delta}_x([-hT, hT]\times  \R^d)} &\le C(d,T,R, \alpha, \psi, 1)\exp\left(2T \, C(d,\delta)^3 \norm{\alpha}_{H^{d/2+\delta}_x(\R^d)} C(d,T, R,\alpha, \psi, 1)^2\right) 
\end{align*}
\par 
\end{proof}
\begin{rmk}\label{remark 1}
    By differentiating equation \eqref{Duhamel A_1} with respect to $x$ twice \textit{before} constructing the contraction, we can in fact obtain similar bounds for the higher order Sobolev norm $\norm{a_1(t,x)}_{C_t^0 H^{d/2+\delta+2}_x([-hT, hT]\times  \R^d)}$, which will then be used when we consider the equation for $a_2$ in \eqref{equation for a_2}.
\end{rmk}
\begin{rmk}
    We can take a similar approach to the equation for $a_2$, constructing a contraction in $C_t^0 H^{d/2+\delta}_x(I \times \R^d)$ for some interval $I$ before extending the solution globally via iteration. Note the importance of Remark \ref{remark 1} in this strategy. We can recursively continue to solve for $a_k(t,x)$ for each $0 \le k\le N$ and obtain the bounds
    \begin{align}
    \norm{a_k(t,x)}_{C_t^0 H^{d/2+\delta}_x([-hT, hT]\times  \R^d)} \lesssim_{d, T, R, \alpha, \psi, k} 1.
    \end{align}
\end{rmk}

\subsection{Proof of Theorem \ref{Main Theorem}}

% {\color{red}
% Recall construction of
% Note that $v$ can be written as follows: $v=h^{-1/2} e^{\cdot}a_0(t,x)$

% }

In this proof, we are going to employ $a_k(t,x), 1\le k\le N$ obtained above, whose construction guarantees that \eqref{Family} is satisfied. First, we observe that thanks to \eqref{a_0 full equation}, $v(t,x)$, defined in \eqref{th-st-v}, can be written as follows
\begin{align}\label{v 3.3}
    v(t,x) = h^{-1/2} e^{i(x\cdot \xi/h - t|\xi|^2/h^2)} a_0(t,x,h).
\end{align}

% Then, the approximate solution 

Let us define 
\begin{align}\label{u_N}
u_N(t,x)= h^{-1/2} e^{i\frac{x\cdot \xi}{h} - i\frac{t|\xi|^2}{h^2}}(a_0+ \cdots + a_N h^N).
\end{align}
Then, by the construction of $a_k(t,x)$ and \eqref{Family}, we have  
$$i\partial_t u_N + \Delta u_N - \alpha(x) u_N|u_N|^2 = e^{i(x\cdot \xi/h -t|\xi|^2/h^2)} \left(h^{N-1/2} \Delta a_N(t,x,h) + \sum_{N+1\le k\le 3N} h^{k-3/2} g_k\right) \equiv R_N.$$
Additionally, note that
\begin{align}
    u_N(-Th,x) = h^{-1/2} e^{i\frac{x\cdot \xi}{h} + i\frac{T|\xi|^2}{h}} \psi(x).
\end{align}
The well-posedness theory for $a_k(t,x,h), N\le k\le 3N$, which follows from Proposition 3.4 and Remark 3.6, yields that $R_N= O_N(h^{N-1/2-s})$ in the $L^\infty_t H_x^{s}$ norm, for $s>d/2$. Thus, if $s=d/2+\delta(d)$ with $d=1,2,3$ and $\delta(d)$ defined in the statement of Lemma \ref{Lemma for a_0}, then $R_N= O(h^{N-1/2-3/2-\delta})= O(h^{N-2-\delta})$ for each $d$. \par
Now, let $u$ be the unique $C_t^0 H^{d/2+\delta}_x([-Th,Th]\times \R^d)$ strong solution to the initial value problem 
\begin{align}
    \begin{cases} 
   i\partial_t u + \Delta u=\alpha(x) u|u|^2 \\
    u(-Th,x) = h^{-1/2} e^{i\frac{x\cdot \xi}{h} + i\frac{T|\xi|^2}{h}} \psi(x),
   \end{cases}
\end{align}
which is guaranteed to exist by Proposition \ref{GWP H^{d/2+}}.

By the stability result, Proposition \ref{Stability}, we obtain the following estimate for the norm of the difference $u-u_N$:
\begin{align*}
    \norm{u-u_N}_{C_t^0 H^{d/2+\delta}_x([-Th, Th] \times \R^d)} \lesssim_{d, \delta, T, N} h^{N-1-\delta}.
\end{align*}
For the sake of the recovery process outlined in Section \ref{Recovery}, we take $N=3$ (larger $N$ may also be taken but it would not improve the error) so that for $0< \delta(d)<1$ 
\begin{align*}
        \norm{u-u_3}_{C_t^0 H^{d/2+\delta}_x([-Th, Th] \times \R^d)} \lesssim_{d, \delta, T} h^{2-\delta} \le h
\end{align*}
with the last inequality holding for small enough $h$. By the Sobolev embedding, we thus have
\begin{align}\label{Stability in C_{t,x}}
    \norm{u-u_3}_{C_{t,x}^0([-Th, Th] \times \R^d)} \lesssim_{\delta, d, T} h .
\end{align}
On the other hand, by \eqref{v 3.3}, \eqref{u_N} and the Sobolev embedding $ H^{d/2+\delta}_x\hookrightarrow C^0_x$, and our bounds on $\norm{a_k}_{C_t^0 H^{d/2+\delta}_x([-Th, Th] \times \R^d)}$ together with the fact that $0<h<1$,
\begin{align}\label{u_3-v}
    \norm{u_3-v}_{C_{t,x}^0 ([-Th, Th] \times \R^d)} \lesssim_{d,\delta, T} h^{1/2}.
\end{align}
Now, we can combine \eqref{u_3-v} and \eqref{Stability in C_{t,x}}, keeping in mind that $0<h<1$, to obtain by the triangle inequality, 
\begin{align}\label{u-v C^0}
    \norm{u-v}_{C_{t,x}^0 ([-Th, Th]\times\R^d)} \lesssim_{T,d, \delta} h^{1/2}.
\end{align}

We can restrict to the space-time point $\{(t,x): t=hT, x=4\xi T+x_0\}$. 
% Note that 
% \begin{align}
%     \int_{-T/h}^{T/h} \alpha(x-2\xi T/h +2\xi s)\,ds = \frac{1}{h} \int_{-T}^T \alpha(x-2\xi T/h+ 2 \xi s/h)\,ds=\frac{1}{h} \int_{-T}^T \alpha(x-2\xi T/h+ 2 \xi s/h)\,ds=: \frac{1}{h} X\alpha(x, \xi).
% \end{align}
By \eqref{u-v C^0}, we have
\begin{align} \label{u evaluated at point}
    u(Th, 4\xi T+x_0) = h^{-1/2} e^{i(3T+x_0\cdot \xi/|\xi|^2) |\xi|^2/h } e^{-i\psi(x_0)^2 X\alpha } \psi(x_0) + O(h^{1/2})
\end{align}
with
\begin{align*}
    X\alpha(x_0,\xi) := \int_{-T}^T \alpha(x_0 + 2\xi (s+T))\,ds = \int_{0}^{2T} \alpha(x_0 +2\xi s)\,ds = \frac{1}{2}\int_{0}^{4T} \alpha(x_0 +\xi s)\,ds.
\end{align*}
From now on, we let $|\xi|=1$. Now, note that if $s>|x_0|+ T_0$, then
\begin{align*}
    |x_0+s\xi| &\ge |s-|x_0|| \\
     &=s- |x_0| \\
    &> T_0,
\end{align*}
and thus $\alpha(x_0+s\xi)=0$ by using the fact that the support of $\alpha$ lies inside $B(0,T_0)$. On the other hand, using the fact that $x_0\in B(0,T_0)$, we have 
\begin{align}\label{T>T_0/2}
4T> 2T_0= T_0+T_0 >|x_0|+T_0.
\end{align}
Now, we can rewrite $X\alpha(x_0, \xi)$ as follows
\begin{align} 
    X\alpha(x_0,\xi) &= \frac{1}{2}\int_{0}^{4T} \alpha(x_0 +\xi s)\,ds \\
    &=  \frac{1}{2}\int_{0}^{\min(4T,|x_0|+T_0)} \alpha(x_0 +\xi s)\,ds \label{use alpha} \\
    &= \frac{1}{2}\int_{0}^{|x_0|+T_0} \alpha(x_0 +\xi s)\,ds \label{use min}\\
    &= \frac{1}{2}\int_{0}^{\infty} \alpha(x_0 +\xi s)\,ds \label{supp alpha},
\end{align}
where to obtain \eqref{use alpha}, we used the fact that $\alpha(x_0+s\xi)$ vanishes for $s>|x_0|+T_0$; to obtain \eqref{use min}, we used \eqref{T>T_0/2}; to obtain \eqref{supp alpha} we used again the fact that $\alpha(x_0+s\xi)$ vanishes for $s>|x_0|+T_0$. 

% From our choice of $h$; to obtain  we have that
% \begin{align}
%     \frac{3T+x_0\cdot \xi}{h}\in 2\pi \N 
% \end{align}
% Therefore
% \begin{align}
%     u(Th, 4\xi T+x_0) = h^{-1/2}  e^{-i\psi(x_0)^2 X\alpha(x_0, \xi) } \psi(x_0) + O(h^{1/2})
% \end{align}

\section{Recovery}\label{Recovery}
\subsection{Proof of Theorem \ref{Recovery Theorem}}

% Note that we can always guarantee that such $h$ exists no matter how small $h$ must be. 

Thanks to \eqref{u evaluated}, we have that
\begin{align}
        u(Th, 4\xi T+x_0) = h^{-1/2} e^{i(3T+x_0\cdot \xi) /h } e^{-i\psi(x_0)^2 X\alpha(x_0, \xi) } \psi(x_0) + O(h^{1/2})
\end{align}
where
\begin{align}
    X\alpha(x_0, \xi) = \frac{1}{2}\int_{0}^{\infty} \alpha(x_0 +\xi s)\,ds.
\end{align}
 Hence,
\begin{align}
            \psi(x_0)^{-1}e^{-i(3T+x_0\cdot \xi) /h }h^{1/2} u(Th, 4\xi T+x_0) =   e^{-i\psi(x_0)^2 X\alpha(x_0, \xi) }  + O(h).
\end{align}
Taking the logarithm of both sides, defined on $\C\setminus (-\infty,0]$,
\begin{align} \label{Recovery with g term}
    \psi(x_0)^2 X\alpha(x_0, \xi) = -\operatorname{Im} \log \left\{\psi(x_0)^{-1}e^{-i(3T+x_0\cdot \xi) /h }h^{1/2} u(Th, 4\xi T+x_0)\right\} + 2\pi g(x_0, \xi) + O(h),
\end{align}
where $g(x_0, \xi)$ is a $\Z$-valued, a priori discontinuous function. We can now use the same approach as in \cite{S_Barreto_2022} to recover $X\alpha(x_0,\xi)$, since $\psi(x_0)$ is a non-zero constant for the values of $x_0$ we need to sample. Indeed, the argument in \cite{S_Barreto_2022} only needs to sample values of $x_0\in C_{\xi, T_0} \cup \operatorname{supp}\alpha$, where
\begin{align}
    C_{\xi, T_0} = \{x_0\in \R^d: X\alpha(x_0,\xi)=0\} \cap B(0,2T_0) 
\end{align}
and in this region $\psi(x_0)=\psi(0)=K$. Note that $C_{\xi, T_0}$ is necessarily non-emtpy as it contains for example the ball $B(3T_0\xi/2, T_0/2)$. The heart of the argument of \cite{S_Barreto_2022} is based on the observation that for their analogue of $x_0\in C_{\xi, T_0}$, the left hand side of \eqref{Recovery with g term} being zero gives that the right hand side must be zero up to $O(h)$. This, in turn, implies that the discontinuities of the first two terms on the right hand side of \eqref{Recovery with g term} must cancel each other. For more details, see Section 3.1 of \cite{S_Barreto_2022}.

\appendix 
\section{}

\label{Appendix}
\subsection{Definitions}\label{Definitions}

For $I$ an interval, we define  $S^0_{t,x}(I \times \R^d)$ as the closure of Schwartz functions under the norm
\begin{align*}
    \norm{f}_{S^0_{t,x}(I\times \R^d)} \equiv \sup_{(p,q) \operatorname{admissible}} \norm{f}_{L^p_t L^q_x(I \times \R^d)}
\end{align*}
with $(p,q)$ Strichartz-admissible if, and only if $2\le p, q\le \infty$, $(p,q,d) \neq (2, \infty, 2$), and 
\begin{align*}
    \frac{2}{p}+\frac{d}{q}=\frac{d}{2}.
\end{align*}
Set 
\begin{align*}
    \norm{f}_{S^1_{t,x}(I\times \R^d)} \equiv \norm{f}_{S^0_{t,x}(I\times \R^d)} +  \norm{\nabla f}_{S^0_{t,x}(I\times \R^d)}
\end{align*}
and similarly define $S^1_{t,x}(I\times \R^d)$.
\begin{lem}\label{Algebra property}
    If $f, g\in H^{d/2+\delta}(\R^d)$ for $\delta>0$, then the following algebra property holds:
    \begin{align*}
        \norm{fg}_{H^{d/2+\delta}(\R^d)} \le C(d, \delta) \norm{g}_{H^{d/2+\delta}(\R^d)}\norm{g}_{H^{d/2+\delta}(\R^d)}.
    \end{align*}
\end{lem}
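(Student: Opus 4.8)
(Note: the right-hand side should of course read $C(d,\delta)\norm{f}_{H^{d/2+\delta}(\R^d)}\norm{g}_{H^{d/2+\delta}(\R^d)}$.) The plan is to work entirely on the Fourier side and reduce the statement to Young's convolution inequality together with the single quantitative input coming from subcriticality. Write $s=d/2+\delta$, so $s>d/2$, let $\langle\xi\rangle=(1+|\xi|^2)^{1/2}$, and recall $\norm{f}_{H^s(\R^d)}^2=\int_{\R^d}\langle\xi\rangle^{2s}\abs{\widehat f(\xi)}^2\,d\xi$ and $\widehat{fg}=\widehat f*\widehat g$. The first step is the elementary pointwise bound
\begin{align*}
\langle\xi\rangle^s\le 2^s\left(\langle\xi-\eta\rangle^s+\langle\eta\rangle^s\right)\qquad\text{for all }\xi,\eta\in\R^d,
\end{align*}
which follows from $|\xi|\le|\xi-\eta|+|\eta|\le 2\max(|\xi-\eta|,|\eta|)$, hence $\langle\xi\rangle\le 2\max(\langle\xi-\eta\rangle,\langle\eta\rangle)$.

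Multiplying $\abs{\widehat{fg}(\xi)}\le\int\abs{\widehat f(\xi-\eta)}\abs{\widehat g(\eta)}\,d\eta$ by $\langle\xi\rangle^s$ and applying the bound above splits the convolution into two pieces; taking $L^2_\xi$ norms, using Minkowski's integral inequality and then Young's inequality $\norm{F*G}_{L^2}\le\norm{F}_{L^2}\norm{G}_{L^1}$ gives
\begin{align*}
\norm{fg}_{H^s(\R^d)}\le 2^s\left(\norm{\langle\cdot\rangle^s\widehat f}_{L^2}\,\norm{\widehat g}_{L^1}+\norm{\widehat f}_{L^1}\,\norm{\langle\cdot\rangle^s\widehat g}_{L^2}\right)=2^s\left(\norm{f}_{H^s}\norm{\widehat g}_{L^1}+\norm{\widehat f}_{L^1}\norm{g}_{H^s}\right).
\end{align*}
It then remains to control the $\F L^1$ norms by the $H^s$ norms. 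By Cauchy--Schwarz,
\begin{align*}
\norm{\widehat g}_{L^1}=\int_{\R^d}\langle\eta\rangle^{-s}\cdot\langle\eta\rangle^{s}\abs{\widehat g(\eta)}\,d\eta\le\left(\int_{\R^d}\langle\eta\rangle^{-2s}\,d\eta\right)^{1/2}\norm{g}_{H^s(\R^d)},
\end{align*}
and identically for $\norm{\widehat f}_{L^1}$. Combining the two displays yields the claim with $C(d,\delta)=2^{s+1}\big(\int_{\R^d}\langle\eta\rangle^{-2s}\,d\eta\big)^{1/2}$.

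The only point where the hypothesis $\delta>0$ is used — and hence the sole (mild) obstacle — is the finiteness of $\int_{\R^d}\langle\eta\rangle^{-2s}\,d\eta$, which holds precisely because $2s=d+2\delta>d$; equivalently, this is the embedding $H^s(\R^d)\hookrightarrow\F L^1(\R^d)\hookrightarrow L^\infty(\R^d)$ valid for $s>d/2$. Everything else in the argument is the soft machinery of Young's inequality and the subadditivity of $\langle\xi\rangle^s$, and is insensitive to the value of $s$. (Alternatively, one could invoke the fractional Leibniz/Kato--Ponce estimate $\norm{fg}_{H^s}\lesssim\norm{f}_{H^s}\norm{g}_{L^\infty}+\norm{f}_{L^\infty}\norm{g}_{H^s}$ and then Sobolev embedding, but the direct Fourier computation above is self-contained and makes the role of $\delta>0$ transparent.)
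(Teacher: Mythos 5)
Your argument is correct: the pointwise subadditivity $\langle\xi\rangle^s\lesssim\langle\xi-\eta\rangle^s+\langle\eta\rangle^s$, Young's inequality $L^2*L^1\to L^2$, and the Cauchy--Schwarz bound $\norm{\widehat g}_{L^1}\le\big(\int\langle\eta\rangle^{-2s}\,d\eta\big)^{1/2}\norm{g}_{H^s}$ (finite precisely because $2s=d+2\delta>d$) assemble into a complete proof, and you correctly flag the typo $\norm{g}\norm{g}$ on the right-hand side. The paper itself states this lemma without proof, treating it as the standard algebra property of $H^s(\R^d)$ for $s>d/2$; your Fourier-side computation is exactly the classical argument one would cite, so there is nothing to compare beyond noting that you have supplied the omitted details, including where $\delta>0$ enters.
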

\subsection{Proof of Proposition 2.3 and 2.4}
\label{Appendix Prop 2.3 and 2.4 proof}
The local well-posedness of the $\alpha$-NLS \eqref{alpha NLS 2} in $d\in\{1,2,3\}$ dimensions follows from a Banach fixed-point contraction argument showing the existence of a fixed point of the map
\begin{align*}
    u\mapsto e^{i(t+Th)\Delta }\phi(x)+ \int_{-Th}^t e^{i(t-s)\Delta} (\alpha(x) u|u|^2(s))\,ds
\end{align*}
in the space $X=C_t^0H^1_x(I\times \R^d) \bigcap S^1_{t,x}(I \times \R^d)$ for some small interval $I$ containing $-Th$
using the fact that $\alpha, \nabla\alpha \in L^\infty_x(\R^d)$. Then, utilizing the defocusing condition $\alpha \ge 0$ and the conserved energy
\begin{align*}
    E(u(t))=\frac{1}{2} \int_{\R^d} |\nabla u|^2\,dx + \frac{1}{4} \int_{\R^d} \alpha(x) |u|^4\,dx= E(u(0)), 
\end{align*}
we can upgrade the local solution to a global solution. 
We thus have that our solution $u$ satisfies a global spacetime bound, by iteration of the local theory, 
\begin{align*}
    \norm{u}_{S^1([0, \infty)\times \R^d)} \lesssim_{} \norm{\phi}_{H^1_x(\R^d)}.
\end{align*}
% and note that this implies that in $d=3$,
% \begin{align}
%     \norm{ u}_{L^2_t L^6_x([0, \infty)\times \R^d)} + \norm{ \nabla u}_{L^2_t L^6_x([0, \infty)\times \R^d)} \lesssim \norm{\phi}_{H^1_x(\R^d)}
% \end{align}
Next, we discuss global persistence of regularity. Now, let $\phi \in H_x^{d/2+\delta(d)}(\R^d)$ with $\delta(d)>0$ such that we have that $d/2+\delta(d) \ge 1$; this is guaranteed by the choice of $\delta(d)$ in the statement of Theorem \ref{Main Theorem}. Hence, $\phi\in H^1(\R^d)$. There exists a solution $u(t,x)$ in $ C^0_t H_x^{d/2+\delta}(I\times \R^d)$ for some interval $I$ which satisfies 
\begin{align*}
    u=e^{i(t+Th)\Delta }\phi(x)+ \int_{-Th}^t e^{i(t-s)\Delta} (\alpha(x) u|u|^2(s))\,ds.
\end{align*}
Furthermore, because $\phi \in H^1(\R^d)$, we have that $\norm{ u}_{S^1([0, \infty)\times \R^d)} \lesssim \norm{\phi}_{H^1_x}$. By Minkowski, and the estimate
\begin{align*}
    \norm{fg}_{H^{d/2+\delta}_x(\R^d)} \lesssim_{d,\delta} \norm{f}_{L^\infty(\R^d)} \norm{g}_{H^{d/2+\delta}_x(\R^d)} + \norm{g}_{L^\infty(\R^d)} \norm{f}_{H^{d/2+\delta}_x(\R^d)},
\end{align*}
we have 
\begin{align*}
    \norm{u(t)}_{H^{d/2+\delta}_x(\R^d)} \le \norm{\phi}_{H^{d/2+\delta}_x(\R^d)} + C_{d,\delta} \int_{-Th}^t \norm{\alpha}_{H^{d/2+\delta}_x(\R^d)} \norm{u(t)}_{L^\infty(\R^d)}^2 \norm{u(t)}_{H^{d/2+\delta}_x(\R^d)}\,ds
\end{align*}
which leads to, by Gronwall,  
\begin{align*}
    \norm{u}_{C^0_tH^{d/2+\delta}_x(I\times \R^d)} &\le \norm{\phi }_{H^{d/2+\delta}_x(\R^d)} \exp\left(C_{d,\delta} \norm{\alpha}_{H^{d/2+\delta}_x(\R^d)}\norm{u}_{L^2_t L^\infty_x(I\times \R^d)}^2\right) \\
    &\le \norm{\phi}_{H^{d/2+\delta}_x(\R^d)} \exp\left(C_{d,\delta} \norm{\alpha}_{H^{d/2+\delta}_x(\R^d)}\norm{u}_{L^2_t L^\infty_x(I\times \R^d)}^2\right).
\end{align*}
This is where we need to treat the estimates differently depending on the dimension.
\par 
If $d=3$, we use Hölder to access the Strichartz pair $\left(\frac{12+4\e}{3+3\e}, 3+\e\right)$ and the Sobolev embedding $W^{1, 3+\e}(\R^3)\hookrightarrow L^\infty(\R^3)$ for $0<\e \ll 1$ to estimate 
\begin{align*}
    \norm{u}_{L^2_t L^\infty_x(I\times \R^3)}^2 &\le |I|^{(6-2\e)/(12+4\e)} \norm{u}^2_{L^{(12+4\e)/(3+3\e)}_t L^\infty_x(I\times \R^3)} \\
    &\lesssim |I|^{1/2+O(\e)} \norm{\langle \nabla \rangle u}^2_{L^{(12+4\e)/(3+3\e)}_t L^{3+\e}_x(I\times \R^3)} \\
    &\lesssim |I|^{1/2+O(\e)} \norm{\phi}_{H^1(\R^3)}^2
\end{align*}
and thus 
% have that by Sobolev, the inequality $|\xi|^{1/2}\lesssim 1+|\xi|$, and the boundedness of the Fourier multiplier operator $\frac{|\nabla|^{1/2}}{1+|\nabla|}:L^6_x\to L^6_x$ (with operator norm bounded by $C'$)
\begin{align*}
    \norm{u}_{C^0_tH^{3/2+\delta}_x(I\times \R^3} &\le \norm{\phi }_{H^{3/2+\delta}_x(\R^3)} \exp\left(C_{3,\delta} \norm{\alpha}_{H^{3/2+\delta}_x(\R^3)} |I|^{1/2+O(\e)} \norm{\phi}_{H^1(\R^3)}^2\right)
\end{align*}
for any interval $I$. This allows us to iterate the local solution to an arbitrarily large interval and the solution is thus global. \par 
If $d=2$, notice that $(2, \infty)$ is is the endpoint estimates which fails, and thus we will need to apply Hölder, followed by the Sobolev embedding $ W^{1, 2+\e}(\R^2) \hookrightarrow L^\infty(\R^2)$ for $0<\e \ll 1$ to access a Strichartz norm $(\frac{2(2+\e)}{\e}, 2+\e)$, as follows
% (We now take $I=[-Th,-Th+S]$),
\begin{align*}
    \norm{u}_{L^2_t L^\infty_x(I\times \R^2)}^2 
    &\le |I|^{1/(1+\e/2)}  \norm{ u}_{L^{2(2+\e)/\e}_t L^{\infty}_x(I\times \R^2)}^2 \\
    &\lesssim |I|^{1+O(\e)} \norm{\langle \nabla \rangle u}_{L^{2(2+\e)/\e}_t  L^{2+\e }_x(I\times \R^2)}^2 \\
    &\lesssim |I|^{1+O(\e)} \norm{\phi}_{H^1_x(\R^d)}^2
\end{align*}
and thus we have
\begin{align*}
    \norm{u}_{C^0_tH^{d/2+\delta}_x(I\times \R^2)} \le \norm{\phi }_{H^{3/2+\delta}_x(\R^3)} \exp\left(C_{ \delta} \norm{\alpha}_{H^{1+\delta}_x(\R^2)} |I|^{1+O(\e)}\norm{\phi }_{H^{1}_x(\R^3)}^2\right)
\end{align*}
immediately giving global existence by iteration of the local theory. \par
For $d=1$, using the embedding $H_x^{1/2+\e} \hookrightarrow L_x^\infty$ for $0<\e\ll 1$, we have that
\begin{align}
    \norm{u}_{L^2_t L^\infty_x(I\times \R^2)}^2 &\le |I|  \norm{u}_{L^\infty_t L^\infty_x(I\times \R^2)}^2 \\
    &\lesssim |I|  \norm{\langle \nabla \rangle^{1/2+\e} u}_{L^\infty_t L^2_x(I\times \R^2)}^2 \\
    &\lesssim |I|  \norm{\langle \nabla \rangle u}_{L^\infty_t L^2_x(I\times \R^2)}^2 \\
    &\lesssim |I| \norm{\phi}_{H^1}^2,
\end{align}
which again gives global existence by the above bounds.
\subsection{Proof of Proposition 2.6 }
\label{Appendix Prop 2.6 proof}
\begin{proof} 
    By Duhamel, we have that
    \begin{align}
        u(t,x)-u_N(t,x)= \int_{-Th}^t e^{i(t-s)\Delta} \alpha(x)\left(u|u|^2 -  u_N|u_N|^2\right)\,ds + \int_{-Th}^t e^{i(t-s)\Delta}\alpha(x) R_N\,ds.
    \end{align}

Let
\begin{align*}
    C(T, h, N, \alpha, \psi, d, \delta )=\max\left(\norm{u}_{C_t^0 H^{d/2+\delta }_x([-Th, Th)\times \R^d)},\norm{u_N}_{C_t^0 H^{d/2+\delta }_x([-Th, Th)\times \R^d)}\right).
\end{align*}
Using the identity
\begin{align*}
    u|u|^2 -  u_N|u_N|^2 = (u-u_N)\left(u_N\overline{u_N} + \overline{u}u_N\right) + \overline{(u-u_N)} u^2
\end{align*}
and the algebra property of $H^{d/2+\delta}_x(\R^d)$ with the triangle inequality and the fact that $t\in[-Th, Th]$,
\begin{align*}
    \norm{u(t,x)-u_N(t,x)}_{H^{d/2+\delta}_x(\R^d)} &\le \int_{-Th}^t \norm{\alpha}_{H^{d/2+\delta}_x} 3 C(d, \delta)^3 C(T, h, N, \alpha, \psi, d, \delta )^2 \norm{u(x,s)-u_N(x,s)}_{H^{d/2+\delta}_x(\R^d)}\,ds \\
    &+ 2hT \, \norm{\alpha}_{H^{d/2+\delta}_x}C(d, \delta) \,\norm{R_N}_{C_t^0 H^{d/2+\delta}_x([-Th, Th]\times \R^d)}.
\end{align*}
Thus, by Grönwall's inequality, 
\begin{align*}
    \norm{u(t,x)-u_N(t,x)}_{H^{d/2+\delta}_x(\R^d)} &\le 2hT \, \norm{\alpha}_{H^{d/2+\delta}_x}C(d, \delta) \exp\left(\int_{-Th}^t C(d, \delta)^3 \norm{\alpha}_{H^{d/2+\delta}_x}C(T, h, N, \alpha, \psi, d, \delta )^2\,ds \right) \\
    &\times \norm{R_N}_{C_t^0 H^{d/2+\delta}_x([-Th, Th]\times \R^d)} \\
    &\le 2hT \norm{\alpha}_{H^{d/2+\delta}_x}\, C(d, \delta) \exp\left(2hT\, C(d, \delta)^3 \norm{\alpha}_{H^{d/2+\delta}_x}C(T, h, N, \alpha, \psi, d, \delta )^2 \right) \\
    &\times \norm{R_N}_{C_t^0 H^{d/2+\delta}_x([-Th, Th]\times \R^d)} \\
    &\lesssim_{d, \delta, T, N, \alpha, \psi } h \norm{R_N}_{C_t^0 H^{d/2+\delta}_x([-Th, Th]\times \R^d)}
\end{align*}
and the result follows by taking the supremum in time on both sides. 
\end{proof}
\nocite{*} 
\bibliography{References}
\bibliographystyle{plain}

\end{document}